\crefname{enumi}{}{}
\crefname{equation}{}{}
\def\@tocline#1#2#3#4#5#6#7{\relax
  \ifnum #1>\c@tocdepth 
  \else
    \par \addpenalty\@secpenalty\addvspace{#2}%
    \begingroup \hyphenpenalty\@M
    \@ifempty{#4}{%
      \@tempdima\csname r@tocindent\number#1\endcsname\relax
    }{%
      \@tempdima#4\relax
    }%
    \parindent\z@ \leftskip#3\relax \advance\leftskip\@tempdima\relax
    \rightskip\@pnumwidth plus4em \parfillskip-\@pnumwidth
    #5\leavevmode\hskip-\@tempdima
      \ifcase #1
       \or\or \hskip 1em \or \hskip 2em \else \hskip 3em \fi%
      #6\nobreak\relax
    \dotfill\hbox to\@pnumwidth{\@tocpagenum{#7}}\par
    \nobreak
    \endgroup
  \fi}
\newtheorem{theorem}{Theorem}[section]
\newtheorem{lemma}[theorem]{Lemma}
\newtheorem{conjecture}[theorem]{Conjecture}
\theoremstyle{definition}
\newtheorem{definition}[theorem]{Definition}
\newtheorem{remark}[theorem]{Remark}
\numberwithin{equation}{section}
\def \R {{\mathbb {R}}}
\def\supp{\operatorname{supp}}
\def\grad{\nabla}
\renewcommand{\tilde}{\widetilde}
\newcommand{\bigset}[1]{
	\big \{ #1 \big \}
}
\newcommand{\biggset}[1]{
	\bigg \{ #1 \bigg \}
}
\newcommand{\set}[1]{
	\left \{ #1 \right \}
}
\newcommand{\dx}{\, {\rm d} x}
\newcommand{\dt}{\, {\rm d} t}
\newcommand{\de}{\, {\rm d}}
\newcommand{\ddeg}{\mathcal{D}_{\mathcal{N}}}
\newcommand{\dpar}{\mathcal{D}}
\newcommand{\ddegg}{\mathcal{D}_{\mathcal{T}}}
\begin{document}
	

\title[Boundary Behavior ... II]{Boundary behavior of solutions to the parabolic $p$-Laplace equation II}

\author[Avelin]{Benny Avelin}
\address{
	Benny Avelin,
	Department of Mathematics, 
	Uppsala University,
	S-751 06 Uppsala, 
	Sweden
} 
\email{\color{blue} benny.avelin@math.uu.se}

\date{\today}

\subjclass[2010]{Primary: 35K92, Secondary: 35K65, 35K20, 35B33, 35B60}

\keywords{$p$-Parabolic Equation, Degenerate, Intrinsic Geometry, Waiting Time Phenomenon, Dichotomy, Decay-Rate, Riesz Measure, Continuation of solutions, Stationary Interface}

\begin{abstract}
	\noindent This paper is the second installment in a series of papers concerning the boundary behavior of solutions to the $p$-parabolic equations. 
	In this paper we are interested in the short time behavior of the solutions, which is in contrast with much of the literature, where all results require a waiting time. We prove a dichotomy about the decay-rate of non-negative solutions vanishing on the lateral boundary in a cylindrical $C^{1,1}$ domain. Furthermore we connect this dichotomy to the support of the boundary type Riesz measure related to the $p$-parabolic equation in NTA-domains, which has consequences for the continuation of solutions.
\end{abstract}
\maketitle


\section{Introduction} 
Recently there has been an upsurge in progress concerning the boundary behavior of solutions to the $p$-parabolic equation \cite{A,AGS,AKN,BBG,BBGP,GNL, GS, KMN}. 

Before our paper \cite{AGS} the behavior of solutions close to the boundary was largely unknown in excess of regularity estimates, in \cite{AGS} we proved a Carleson estimate for Lipschitz domains and also pointed out that in the degenerate regime ($p > 2$), the failure of forward Harnack chains stems from the possible decay of certain solutions near the boundary, bounded by $d(x,\partial \Omega)^{q}$ for $q \geq \frac{p}{p-2}$. In \cite{AGS} we also noted that the boundary Harnack inequality cannot be true in the same form as for $p=2$, however in \cite{AKN} we proved that if the solutions have existed for long enough then the boundary Harnack inequality holds. This was a major breakthrough, but understanding of how badly the boundary Harnack can fail for shorter time-scales was still lacking. The purpose of this paper is to bridge this gap by proving exactly how badly the boundary Harnack inequality fails. We do this by studying the decay-rate of solutions that vanish on the boundary and discover a certain decay-rate phenomenon, a dichotomy, that only occurs in the degenerate regime. To describe this phenomenon we first state our equation



\begin{equation} \label{ppar1}
	u_t - \Delta_p u = u_t - \nabla \cdot (|Du|^{p-2} Du) = 0, \quad p > 2,
\end{equation}
we call \Cref{ppar1} the degenerate $p$-parabolic equation. 
The phenomenon consists of two related observations, first of all is the existence of solutions with different decay rates close to the boundary, let us illustrate with an example.
Let $T \in \R$, then in $\R_+^n \times (-\infty,T)$ consider
\begin{align} \label{examp1}
	u_1(x,t)&=C(p)(T-t)^{-\frac{1}{p-2}%
	}x_{n}^{\frac p{p-2}}.
\end{align}
Another solution which we simply obtain is $u_2 = x_n$, the point is that $u_1$ and $u_2$ behave very differently at the boundary and shows, for instance, that a short time boundary Harnack inequality cannot hold. That is the ratio
\begin{align*}
	\frac{u_2}{u_1} \qquad \text{is not bounded from above and below.}
\end{align*} 
Another observation concerning the solution in \cref{examp1} is that 
the solution is exactly so small that one cannot build Harnack chains up to the boundary, as noted in \cite{AGS}.
In fact for the $p$-parabolic equation, the waiting time for $u_1$ dictated by the Harnack inequality at a point $(x_0,t_0)$ is
\begin{equation} \nonumber
	C (T-t_0)x_0^{-p}r^p.
\end{equation}
This implies that if we wish to apply the Harnack inequality with a radius comparable to the distance to the boundary then the waiting time will be of the same order of magnitude as the distance from $t_0$ to $T$ (the end of existence), in essence this implies that a Harnack chain cannot be performed.


The other aspect is that solutions retain some of the behavior of the initial data in the sense described by the following example from \cite{AGS}. That is, consider the following supersolution to \Cref{ppar1} 
\begin{equation}\nonumber 
	u(x,t)=C(p)(T-t)^{-\frac1{p-2}}\exp(-\frac1{x}),\qquad x\in (0,1/4).
\end{equation}
It is now clear that if our domain is $E_T = (0,1) \times (0,T]$ then the Cauchy-Dirichlet problem in $E_T$ has a short time behavior that is heavily dictated by the decay of the initial data, in contrast to the linear case, where no matter what positive data we have we will always have that $\partial_x u(0,t) > 0$ for all $t \in (0,T)$. We explore this ``memory effect'' further in \cref{sec:memory}.

Our prime motivating idea is that the existence of a forward Harnack chain should imply that the equation behaves like a non-degenerate equation, in the sense that there would be no memory effect and there can only exist one behavior at the boundary, i.e.~solutions vanish like the distance function. We can state this simply as, there should be no solution that vanishes slower than $u_1$ in \cref{examp1} and at the same time, faster than $u_2=x_n$. To state this precisely we need to introduce some concepts, we start with the definition of a degenerate boundary point


\begin{definition} \label{def:deg}
	Let $\Omega \subset \R^n$ be a domain and let $u$ be a solution to \cref{ppar1} in $\Omega_T = \Omega \times (0,T)$. We call $w \in \partial \Omega$ a degenerate point at $t_0 \in (0,T)$ with respect to $u$ if
	\begin{align*}
		\mathcal{N}^+\left [ \frac{u(x,t_0)}{|x-w|^{\frac{p}{p-2}}} \right ] (w) < \infty,
	\end{align*}
	where
	\begin{align*}
		\mathcal{N}^+ [f] (w) &= \limsup_{x \in \Gamma (w), x \to w} f(x) \\
		\Gamma_\alpha(w) &= \set{x \in \Omega: \alpha|x-w| < d(x,\partial \Omega)}
	\end{align*}
	for some $\alpha \in (0,1)$.
\end{definition}
\begin{remark} \label{rem:Non-tangential}
	The value of $\alpha > 0$ in the above definition does not enter into any of the estimates and is irrelevant as long as $\Gamma(w)$ satisfies
	\begin{align} \label{eq:non-empty-NTA}
		\Gamma(w) \cap B_r(w) \neq \emptyset, \quad  \forall r > 0.
	\end{align}
	Since we will be working in $(M,r_0)$-NTA-domains (see \cref{NTA}), we immediately see that if $\alpha < 1/M$ the above property is true, since 
	\begin{align*}
		a_\varrho(w) \in \Gamma(w), \quad \forall \varrho < r_0.
	\end{align*}
	Hence in the rest of this paper we will ignore the value of $\alpha > 0$, but we will always assume that $\alpha > 0$ is chosen such that \cref{eq:non-empty-NTA} holds, which according to the above means that in $(M,r_0)$-NTA-domains we assume $\alpha < 1/M$.
\end{remark}

Secondly we define what we mean by a non-degenerate point, i.e.

\begin{definition}
	Let $\Omega \subset \R^n$ be a domain and let $u$ be a solution to \cref{ppar1} in $\Omega_T = \Omega \times (0,T)$. We call $w \in \partial \Omega$ a non-degenerate point at $t_0 \in (0,T)$ with respect to $u$ if
	\begin{align*}
		\limsup_{\Omega \ni x \to w} \frac{d(x,\partial \Omega)}{u(x,t_0)} < \infty.
	\end{align*}
\end{definition}

The main result in this paper is that for NTA-domains that satisfy the interior ball condition there can only be degenerate and non-degenerate points $(w,t)$ for a given $w \in \partial \Omega$ except for possibly a single time $\hat t$ that we call the threshold point, see Figure \ref{fig:first}. This closes the gap between short-time and long-time boundary behavior in the degenerate regime and gives together with \cite{AKN} a full picture in smooth domains. As mentioned before the existence of degenerate points in the sense of \cref{def:deg} is purely a non-linear fact and does not occur for $p>2$. We prove the main result using a sequence of fairly convoluted comparisons with carefully selected barrier-/comparison-functions, also relying on the full force of the delicate estimates developed in \cite{AKN}.

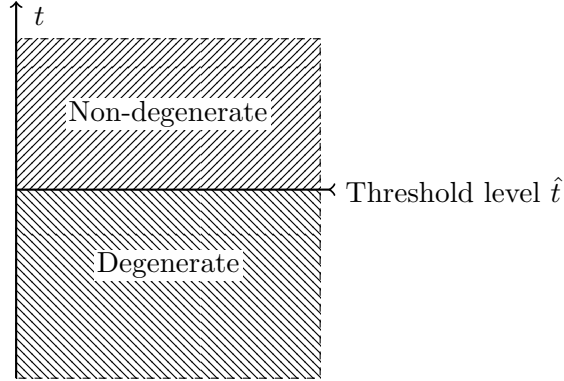
\begin{figure}
	\label{fig:first}
	\begin{center}
		\begin{tikzpicture}
			\draw [thick,->] (0,0)--(0,5);
			\node[right] at (0.1,4.8) {$t$};
			\draw[pattern=north east lines,dashed] (0,2.5) rectangle (4,4.5);
			\node[fill=white,inner sep=1pt] at (2,3.5) {Non-degenerate};
			\draw[thick,-<] (0,2.5)--(4.2,2.5);
			\node[right] at (4.2,2.5) {Threshold level $\hat t$};
			\draw[pattern=north west lines,dashed] (0,0) rectangle (4,2.5);
			\node[fill=white,inner sep=1pt] at (2,1.5) {Degenerate};
		\end{tikzpicture}
	\end{center}
	\caption{Schematic overview of basic diffusion}
\end{figure}

\subsection{Outline of paper}
We begin the contents of the paper in \cref{sec:Definitions} and \cref{sec:preliminary}, where we provide all the definitions and results needed for the bulk of the paper. Next in \cref{sec:main results} we state all the main results and prove some of the simple but powerful consequences. The rest of the sections is devoted to proofs of the main results, except for \cref{sec:cone,sec:interface}. In \cref{sec:cone} we give an example of a solution with support never reaching the boundary in a conical domain. Finally in \cref{sec:interface} we deal with an example having stationary support for a non-zero time interval, we theorize about the length of that interval and provide some numerical computations concerning its length.

\vspace{0.5cm}

\noindent {\bf Acknowledgment} The author was supported by the Swedish Research Council, dnr: 637-2014-6822.

\section{Definitions and notation}
\label{sec:Definitions}
Points in $ \R^{n+1} $ are denoted by $ x = ( x_1, \dots, x_n,t)$. Given a set $E\subset \R^n$, let $ \bar E, \partial E$, $\mbox{diam } E$, $E^c$, $E^\circ $, denote the closure, boundary, diameter, complement and interior of $E$, respectively. Let $ \cdot $ denote the standard inner product on $ \R^{n} $, let $ | x | = (x \cdot x )^{1/2}$ be the Euclidean norm of $ x, $ and let $\dx $ be Lebesgue $n$-measure on $ \R^{n}. $ Given $ x \in \R^{n}$ and $r >0$, let $ B_{r} (x) = \{ y \in \R^{n} : | x - y | < r \}$. Given $ E, F \subset \R^{n}, $ let $ d ( E, F ) $ be the Euclidean distance from $ E $ to $ F $. In case $ E = \{y\}, $ we write $ d ( y, F )$. For simplicity, we define $\sup$ to be the essential supremum and $\inf$ to be the essential infimum. If $ O \subset \R^{n} $ is open and $ 1 \leq q \leq \infty, $ then by $ W^{1 ,q} ( O )$ we denote the space of equivalence classes of functions $ f $ with distributional gradient $ \nabla f = ( f_{x_1}, \dots, f_{x_n} ), $ both of which are $ q $-th power integrable on $ O. $ Let 
\begin{align*}
		\| f \|_{ W^{1,q} (O)} = \| f \|_{ L^q (O)} + \| \, | \nabla f | \, \|_{ L^q ( O )}
\end{align*}
be the norm in $ W^{1,q} ( O ) $ where $ \| \cdot \|_{L^q ( O )} $ denotes the usual Lebesgue $q$-norm in $O$. $ C^\infty_0 (O )$ is the set of infinitely differentiable functions with compact support in $ O$ and we let $ W^{1 ,q}_0 ( O )$ denote the closure of $ C^\infty_0 (O )$ in the norm $\| \cdot\|_{ W^{1,q} (O)}$. $ W^{1,q}_{\rm loc} ( O ) $ is defined in the standard way. By $ \nabla \cdot $ we denote the divergence operator. Given $t_1<t_2$ we denote by $L^q(t_1,t_2,W^{1,q} ( O ))$ the space of functions such that for almost every $t$, $t_1\leq t\leq t_2$, the function $x\to u(x,t)$ belongs to $W^{1,q} ( O )$ and
\begin{equation*}
	\| u \|_{ L^q(t_1,t_2,W^{1,q} ( O ))}:=\biggl (\int\limits_{t_1}^{t_2}\int\limits_O\biggl (|u(x,t)|^q+|\nabla u(x,t)|^q\biggr )\dx\dt\biggr )^{1/q} <\infty.
\end{equation*}
The spaces $L^q(t_1,t_2,W^{1,q}_0 ( O ))$ and $L^q_{\rm loc}(t_1,t_2,W^{1,q}_{\rm loc} ( O ))$ are defined analogously. Finally, for $I \subset \R$, we denote $C(I;L^{q} ( O ))$ as the space of functions such that $t\to \| u(t,\cdot) \|_{ L^{q} (O)}$ is continuous whenever $t \in I$. $C_{\rm loc}(I;L^{q}_{\rm loc} ( O ))$ is defined analogously.


\subsection{Weak solutions} Let $ \Omega \subset \R^n $ be a bounded domain, i.e., a connected open set. For $t_1<t_2$, we let $\Omega_{t_1,t_2}:= \Omega \times (t_1,t_2)$. Given $p$, $1<p<\infty$, we say that $ u $ is a weak solution to
\begin{equation} \label{Hu}
	\partial_t u - \Delta_p u = 0
\end{equation}
in $ \Omega_{t_1,t_2}$ if $ u \in L_{\rm loc}^p(t_1,t_2,W_{\rm loc}^{1,p} ( \Omega))$ and
\begin{equation}
	\label{1.1tr} \int_{\Omega_{t_1,t_2} } \left(- u \partial_t \phi  + |\nabla u|^{p-2}\nabla u\cdot\nabla \phi \right) \dx\dt = 0
\end{equation}
whenever $ \phi \in C_0^\infty(\Omega_{t_1,t_2})$. First and foremost we will refer to equation \cref{Hu} as the $p$-parabolic equation and if $ u $ is a weak solution to~\Cref{Hu} in the above sense, then we will often refer to $u$ as being $p$-parabolic in $\Omega_{t_1,t_2}$. For $p \in (2,\infty)$ we have by the parabolic regularity theory, see \cite{DB}, that any $p$-parabolic function $u$ has a locally H{\"o}lder continuous representative. In particular, in the following we will assume that $p \in (2,\infty)$ and any solution $u$ is continuous. If \Cref{1.1tr} holds with $=$ replaced by $\geq$ ($\leq$) for all $ \phi \in C_0^\infty(\Omega_{t_1,t_2})$, $\phi \geq 0$, then we will refer to $u$ as a weak supersolution (subsolution).


\subsection{Geometry} We here state the geometrical notions used throughout the paper.
\begin{definition}
	\label{NTA} A bounded domain $\Omega$ is called non-tangentially accessible (NTA) if there exist $M \geq 2$ and $r_0$ such that the following are fulfilled:
	\begin{enumerate}[label=\bf{(\arabic*)}]
		\item \label{NTA1} \emph{corkscrew condition:} for any $ w\in \partial\Omega, 0<r<r_0,$ there exists a point $a_r(w) \in \Omega $ such that
		\begin{equation}
			\nonumber M^{-1}r<|a_r(w)-w|<r, \quad d(a_r(w), \partial\Omega)>M^{-1}r ,
		\end{equation}
		\item \label{NTA2} $\R^n \setminus \Omega$ satisfies \cref{NTA1},
		\item \label{NTA3} \emph{uniform condition:} if $ w \in \partial \Omega, 0 < r < r_0, $ and $ w_1, w_2 \in B_r(w) \cap \Omega, $ then there exists a rectifiable curve $ \gamma: [0, 1] \to \Omega $ with $ \gamma ( 0 ) = w_1, \gamma ( 1 ) = w_2, $ such that
		\begin{enumerate}
			\item $H^1 ( \gamma ) \, \leq \, M \, | w_1 - w_2 |,$
			\item $\min\{H^1(\gamma([0,t])), \, H^1(\gamma([t,1]))\, \}\, \leq \, M \, d ( \gamma(t), \partial \Omega)$, for all $t \in [0,1]$.
		\end{enumerate}
	\end{enumerate}
\end{definition}

The values $ M $ and $r_0$ will be called the NTA-constants of $ \Omega$. For more on the notion of NTA-domains we refer to \cite{JK}.

\begin{definition}
	\label{defBall}
	Let $\Omega \subset \R^n$ be a bounded domain. We say that $\Omega$ satisfies the interior ball condition with radius $r_0 > 0$ if for each point $y \in \partial \Omega$ there exist a point $x^+ \in \Omega$ such that $B_{r_0}(x^+) \subset \Omega$ and $\partial B_{r_0}(x^+) \cap \partial \Omega =\{y\}$.
\end{definition}


\subsection{The continuous Dirichlet problem} Assuming that $\Omega$ is a bounded NTA-domain one can prove, see \cite{BBGP} and \cite{KL}, that all points on the parabolic boundary
\begin{equation*}
	\partial_p\Omega_T = S_T \cup (\bar \Omega \times \{0\})\,, \qquad S_T = \partial \Omega \times [0,T],
\end{equation*}
of the cylinder $\Omega_T$ are regular for the Dirichlet problem for equation \Cref{Hu}. In particular, for any $f\in C( \partial_p\Omega_T)$, there exists a unique Perron-solution $u=u_f^{\Omega_T}\in C(\overline \Omega_T)$ to the Dirichlet problem \cref{Hu} in $\Omega_T$ and $u =f$ on $\partial_p \Omega_T$.

In the study of the boundary behavior of quasi-linear equations of $p$-Laplace type, certain Riesz measures supported on the boundary and associated to non-negative solutions vanishing on a portion of the boundary are important, see \cite{LN1,LN2}. These measures are non-linear generalizations of the harmonic measure relevant in the study of harmonic functions. Corresponding  measures can also be associated to solutions to the $p$-parabolic equation. Let $u$ be a non-negative solution in $\Omega_T$, assume that $u$ is continuous on the closure of $\Omega_T$, 
and that $u$ vanishes on $\partial_p \Omega_T \cap Q$ with some open set $Q$. Extending $u$ to be zero in $Q \setminus \Omega_T$, we see that $u$ is a continuous weak subsolution to~\cref{Hu} in $Q$. From this one sees that there exists a unique locally finite positive Borel measure $ \mu$, supported on $S_T \cap  Q$, such that
\begin{align}
	\label{eq:Riesz} & -\int_Q u \partial_t\phi\dx \dt +\int_Q|\nabla u|^{p-2}\nabla u\cdot\nabla \phi \dx \dt = -\int_Q \phi \de \mu
\end{align}
whenever $ \phi \in C_0^\infty(Q)$. Whenever we have a solution and when there is no danger of confusion we will simply use $\mu$ to denote the corresponding measure, in other cases we will subscript the measure with the solution, i.e. for a solution $v$ we will use the notation $\mu_v$.


\section{Preliminary estimates: Carleson and Backward Harnack chains}
\label{sec:preliminary}
The proofs of this paper relies on the following estimates from \cite{AKN}, we we include for the ease of the reader.
The following estimate is a simple Harnack chain lemma for forward in time Harnack chain that we developed in \cite{AKN}.

\begin{lemma} \label{lem AKN HChain}
	Let $\Omega\subset\R^N$ be a domain and let $T>0$. Let $x,y$ be two points in $\Omega$ and assume that there exist a sequence of balls $\set{B_{4r}(x_j)}_{j=0}^k$ such that $x_0=x$, $x_k=y$, $B_{4r}(x_j)\subset\Omega$ for all $j=0,...,k$ and that $x_{j+1} \in B_{r}(x_j)$, $j=0,\ldots,k-1$. Let $u$ be a non-negative solution to \eqref{Hu} in $\Omega_T$ and assume that $u(x,t_0)>0$. There exist constants $\bar c_i \equiv \bar c_i(p,n)$, $i \in \set{1,2}$ and $\bar c_3\equiv \bar c_3(p,n,k)>1$ such that if 
	\begin{equation*}
		t_0 - (\bar c_1/u(x,t_0))^{p-2} (4r)^p > 0,\ t_0 + \bar c_3(k) u(x,t_0)^{2-p} r^p <T, 
	\end{equation*}
	then 
	\begin{equation*}
		u(x,t_0) \leq \bar c_2^{k} \inf_{z \in B_r(y)} u(z,t_0+\bar c_3(k) u(x,t_0)^{2-p} r^p). 
	\end{equation*}
\end{lemma}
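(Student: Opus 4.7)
The approach is to iterate DiBenedetto's intrinsic Harnack inequality along the chain $\{B_{4r}(x_j)\}_{j=0}^{k}$. Setting $u_j := u(x_j,s_j)$ with $s_0 = t_0$ and $u_0 = u(x,t_0)$, the intrinsic Harnack applied at $(x_j,s_j)$ with radius $r$ yields, whenever the corresponding intrinsic cylinder is admissible,
$$u_j \le \gamma \inf_{B_r(x_j)} u(\cdot, s_{j+1}), \qquad s_{j+1} := s_j + \sigma u_j^{2-p} r^p,$$
for constants $\gamma=\gamma(p,n)$ and $\sigma=\sigma(p,n)$. Since $x_{j+1} \in B_r(x_j)$, this gives $u_{j+1} \ge \gamma^{-1} u_j$ and inductively $u_j \ge \gamma^{-j} u_0$.

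Because $p > 2$, the lower bound $u_j \ge \gamma^{-j} u_0$ is equivalent to $u_j^{2-p} \le \gamma^{(p-2)j} u_0^{2-p}$, so the total forward-time advance after $k$ steps is bounded by a geometric sum
$$s_k - t_0 \;=\; \sum_{j=0}^{k-1}\sigma u_j^{2-p} r^p \;\le\; \sigma r^p u_0^{2-p}\,\frac{\gamma^{(p-2)k}-1}{\gamma^{p-2}-1}.$$
One additional Harnack step at $(x_k,s_k)$ then upgrades the pointwise value $u_k$ to an infimum over $B_r(y) = B_r(x_k)$; taking $\bar c_2 := \gamma$ and defining $\bar c_3(k)$ as the coefficient of $u_0^{2-p} r^p$ produced by $s_{k+1} - t_0$, one arrives at the target time $t_0 + \bar c_3(k) u_0^{2-p} r^p$ and the factor $\bar c_2^{k}$ (after relabeling) with the desired dependence on $(p,n,k)$.

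The forward cylinder condition at the terminal step is exactly the hypothesis $t_0 + \bar c_3(k) u_0^{2-p} r^p < T$, and the backward cylinder condition at the initial application reads $t_0 > \sigma u_0^{2-p}(4r)^p$, which is the hypothesis $t_0 - (\bar c_1/u_0)^{p-2}(4r)^p > 0$ upon taking $\bar c_1^{p-2} = \sigma$. The main obstacle is handling the backward cylinder requirement at intermediate steps: a naive symmetric intrinsic Harnack at step $j$ would demand backward room of order $\gamma^{(p-2)j} u_0^{2-p}(4r)^p$, which would force $\bar c_1$ to blow up with $k$ through a factor $\gamma^{(p-2)k}$ and is incompatible with the stated independence of $k$. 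The clean way around this is to replace the intermediate applications by the forward-in-time expansion-of-positivity estimate of DiBenedetto--Gianazza--Vespri: once the initial Harnack delivers $u \ge \gamma^{-1} u_0$ on $B_r(x_0)$ at time $s_1$, each expansion step propagates a fixed fraction of the current lower bound onto a ball of doubled radius after an intrinsic waiting time, using only a forward cylinder. Iterating this forward estimate along the chain preserves both the geometric loss $\gamma^{-k}$ and the geometric accumulation of time, while invoking no backward condition beyond the initial one, so that $\bar c_1, \bar c_2$ depend only on $(p,n)$ and $\bar c_3$ only on $(p,n,k)$.
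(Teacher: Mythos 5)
The paper offers no proof of this lemma to compare against: it is imported verbatim from \cite{AKN} (``we include for the ease of the reader''), and your strategy --- one application of the DiBenedetto--Gianazza--Vespri intrinsic Harnack inequality at $(x,t_0)$ followed by forward propagation along the chain --- is exactly the circle of ideas used there. You also correctly identify the one genuine obstruction to the naive iteration: the intrinsic Harnack inequality requires a backward intrinsic cylinder at each point of application, and in the worst case $u_j\approx\gamma^{-j}u_0$ this would demand backward room of order $\gamma^{(p-2)k}u_0^{2-p}r^p$ at time $t_0$, which is incompatible with $\bar c_1=\bar c_1(p,n)$. Replacing the intermediate applications by the forward expansion-of-positivity estimate is the right repair.

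One point you must make explicit, because as written your middle paragraph does not deliver the stated conclusion: in the pure Harnack iteration the waiting time at step $j$ is $\sigma\, u(x_j,s_j)^{2-p}r^p$ with the \emph{actual} solution value, and the inequality is asserted only at that single intrinsic time. Hence the arrival time $s_{k+1}$ --- and with it your ``$\bar c_3(k)$ defined as the coefficient produced by $s_{k+1}-t_0$'' --- depends on the solution $u$, not only on $(p,n,k)$; the geometric sum is merely an upper bound, while the lemma requires the infimum at the prescribed time $t_0+\bar c_3(k)u(x,t_0)^{2-p}r^p$. Your expansion-of-positivity route repairs this too, provided you use it in the form where the waiting times are computed from the deterministic propagated lower bounds $\gamma^{-1}\eta^{j}u(x,t_0)$ and the resulting lower bound holds for all times in an interval of intrinsic length; that interval of validity is what allows you to synchronize the steps so as to land exactly at $t_0+\bar c_3(k)u(x,t_0)^{2-p}r^p$ with $\bar c_3=\bar c_3(p,n,k)$, and it also makes the final ``upgrade to an infimum over $B_r(y)$'' automatic, with no extra Harnack step (and hence no extra backward requirement) at $(x_k,s_k)$. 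A last piece of bookkeeping: expansion of positivity spreads a bound from $B_\rho$ to $B_{2\rho}$ only when a larger concentric ball lies in $\Omega$; since you only have $B_{4r}(x_j)\subset\Omega$, run it at radius $\rho=c(p,n)r$ with a bounded number of sub-steps per chain step, which changes $\bar c_2$ and $\bar c_3$ only by $(p,n)$-dependent factors.
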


As we already mentioned in \cite{AKN} there is a vast difference between Harnack chains performed backwards in time versus chains performed forward in time. This is a point of philosophical nature. When building Harnack chains forward in time we solely use the known information at the point of reference. In contrast, when performing backward Harnack chains we are instead considering the question, how large could the solution have been in the past such that the solution is below a given value at a given reference point. In essence backwards chains does not really rely on the values of the actual solution but a forward chain is forced to do so. This has the consequence that forward chains have a waiting time that we have no control over, while for backward chains we have a fairly fine control over the waiting time, actually since we can change the reference value $\Lambda$ we have more control over the waiting time than we have in the linear setting (see \cite{A}).

We will need the following version of the backward Harnack chain theorem that we proved in \cite{AKN}, this is an updated version of the results in \cite{AGS} with more control over the waiting time and also valid in NTA-domains.

\begin{theorem}
	\label{NTA BChain} Let $\Omega\subset\R^n$ be an NTA-domain with constants $M$ and $r_0$, let $x_0 \in \partial \Omega$, $T>0$, and let $0 < r < r_0$. Let $x,y$ be two points in $\Omega \cap B_r(x_0)$ such that 
	\begin{equation*}
		\varrho := d(x,\partial \Omega) \leq r \qquad \mbox{and} \qquad d(y,\partial \Omega) \geq \frac{r}{4}. 
	\end{equation*}
	Assume that $u$ is a non-negative solution to \cref{Hu} in $\Omega_T$, and assume that $\Lambda \geq u( y , s)$ is positive. Let $\delta \in (0,1]$. Then there exist positive constants $C_i \equiv C_i(p,n)$ and $c_i \equiv c_i(p,n,M)$, $i\in\{4,5\}$, such that if $s<T$ and 
	\begin{equation}\nonumber 
		\max\left\{ \left(\frac{c_4^{1/\delta}}{c_h} \left( \frac{r}{\varrho} \right)^{c_5/\delta} \Lambda \right)^{2-p } (\delta \varrho)^p , s - \tau \right\}\, \leq\, t\, \leq\, s - \delta^{p-1} \tau, 
	\end{equation}
	with 
	\begin{equation*}
		\tau := C_4 \left[C_5 \Lambda \right]^{2-p} r^p 
	\end{equation*}
	then 
	\begin{equation*}
		u(x,t) \leq c_4^{1/\delta} \left( \frac{r}{\varrho} \right)^{c_5/\delta} \Lambda. 
	\end{equation*}
	Furthermore, constants $c_i,C_i$, $i \in \{4,5\}$, are stable as $p \to 2^+$. 
\end{theorem}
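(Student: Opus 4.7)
The plan is to argue by contradiction: suppose that at some admissible time $t$ one has $u(x,t) > V$ with $V := c_4^{1/\delta}(r/\varrho)^{c_5/\delta}\Lambda$, and then propagate this large value forward in space-time from $(x,t)$ to $(y,s)$ using the forward Harnack chain lemma \cref{lem AKN HChain}, eventually concluding $u(y,s) > \Lambda$, which contradicts the hypothesis $\Lambda \geq u(y,s)$.

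First I would build, using the uniform condition in \cref{NTA} and the corkscrew points at decreasing scales, a Whitney-type sequence of balls $\{B_{4r_j}(x_j)\}_{j=0}^{k}$ inside $\Omega \cap B_{Mr}(x_0)$, with $x_0 = x$, starting radius $r_0$ comparable to $\delta\varrho$, radii $r_j$ growing geometrically, and terminating at a non-tangential point $\tilde y$ with $d(\tilde y,\partial\Omega) \geq r/C(M)$. A standard NTA packing argument bounds the chain length by $k \leq c(M,n)\log_2(r/(\delta\varrho))$. I would then iterate \cref{lem AKN HChain}: under the contradiction hypothesis the value at the $j$th chain center is at least $\bar c_2^{-j}V$, and since $p > 2$ the waiting time contributed by step $j$ is bounded by $V^{2-p} \bar c_2^{j(p-2)} r_j^p$. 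Summing with $r_j \approx 2^j \delta\varrho$ gives a total time lapse of order $V^{2-p} \bar c_2^{k(p-2)} r^p$, which is much smaller than $\tau = C_4(C_5\Lambda)^{2-p}r^p$ once $c_4, c_5$ are chosen so that $V$ dominates $\bar c_2^k \Lambda$. The lower bound in the $\max$ of the hypothesis is exactly the past-time condition $t_0 - (\bar c_1/u(x,t_0))^{p-2}(4r_0)^p > 0$ needed for the first step (with $r_0 = \delta\varrho$ and the Harnack-type constant $c_h$ absorbing $\bar c_1$).

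The chain ends with $u(\tilde y, t_1) \geq \bar c_2^{-k} V$ at some $t_1 \leq t + C V^{2-p} \bar c_2^{k(p-2)} r^p$. Choosing $c_5 = c_5(p,n,M) > c(M,n)\log_2 \bar c_2$ and $c_4 = c_4(p,n,M)$ large enough, one obtains $u(\tilde y, t_1) \geq A\Lambda$ for any prescribed $A$. Since both $\tilde y$ and $y$ lie in the non-tangential interior of $\Omega \cap B_{Mr}(x_0)$, a final \emph{uniform} forward Harnack chain of ball size comparable to $r/M$ and length depending only on $n, M$ then propagates this value from $(\tilde y, t_1)$ to $(y, s)$; the upper bound $t \leq s - \delta^{p-1}\tau$ ensures that $s - t_1$ is at least of order $\delta^{p-1}\tau$, which is sufficient for this terminal chain's waiting time at value $\approx A\Lambda$. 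Enlarging $c_4$ one final time to absorb the losses of the terminal chain yields $u(y,s) > \Lambda$, the desired contradiction. Stability as $p \to 2^+$ transfers directly from that of the constants in \cref{lem AKN HChain}.

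The main obstacle is the simultaneous and $\delta$-uniform tuning of $c_4$ and $c_5$: the exponent $c_5/\delta$ must absorb the $\bar c_2^k$-loss accrued over $k \sim \log(r/\varrho)$ steps uniformly in $\delta \in (0,1]$, the prefactor $c_4^{1/\delta}$ must dominate all multiplicative Harnack losses from the terminal interior chain, and the waiting-time budget $[t, s - \delta^{p-1}\tau]$ must be respected despite the accumulated waiting times $V^{2-p}\bar c_2^{k(p-2)}r^p$ — this is feasible because enlarging $c_4, c_5$ makes $V$ large, hence $V^{2-p}$ small, shrinking the chain's waiting time while increasing its spatial reach.
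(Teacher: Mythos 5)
Your overall strategy---argue by contrapositive and propagate a hypothetically large past value forward along an NTA Harnack chain via \cref{lem AKN HChain}, with the losses $c_4^{1/\delta}(r/\varrho)^{c_5/\delta}$ generated by a chain of length $k\sim\log(r/(\delta\varrho))$---is indeed the mechanism behind this theorem (the paper itself only rescales to $\Lambda=1$ and defers to \cite{AKN}), and your reading of the lower entry of the $\max$ as the requirement that the first intrinsic cylinder fits above time $0$ is essentially right.

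There is, however, a genuine gap at the terminal step. \Cref{lem AKN HChain} (and the intrinsic Harnack inequality behind it) gives a lower bound only at the \emph{specific} later time $t_0+\bar c_3(k)\,u(x,t_0)^{2-p}r^p$, and under your contradiction hypothesis you control $u(x,t)$ only from below; since $p>2$, a larger value makes every waiting time \emph{shorter}, so the chain reaches the interior at some unknown time $t_1$ which is in general strictly before $s$, and nothing in your sketch transfers the largeness from time $t_1$ to time $s$. The claim that ``$s-t_1$ being at least of order $\delta^{p-1}\tau$ is sufficient for the terminal chain's waiting time at value $\approx A\Lambda$'' does not close this: you cannot lengthen an intrinsic waiting time by increasing $A$, and you cannot simply wait, because the solution may decay. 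The missing ingredient is the interior iteration at scale $\sim r$: applying the Harnack inequality repeatedly from $(\tilde y,t_1)$ with geometrically decreasing values, the elapsed times form an increasing geometric series, and summing yields a bound of the form $u(y,s)\ge c\, r^{p/(p-2)}(s-t_1)^{-1/(p-2)}$, which depends only on the elapsed time and not on the (unknown) large starting value. This is exactly where $\tau=C_4[C_5\Lambda]^{2-p}r^p$ and the window $\max\{\cdot,\,s-\tau\}\le t\le s-\delta^{p-1}\tau$ enter: the constraint $s-t\le\tau$ makes this elapsed-time-only bound exceed $\Lambda$ and produces the contradiction, while $s-t\ge\delta^{p-1}\tau$ (together with the choice of $c_4,c_5$, uniformly in $\delta\in(0,1]$) guarantees that the travelling part of the chain, whose total waiting time is controlled by the $(2-p)$-th power of the hypothesized large value, has already terminated before time $s$. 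In your sketch these two constraints are assigned essentially the opposite roles, and without the elapsed-time-only lower bound the final inequality $u(y,s)>\Lambda$ is not actually reached.
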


\begin{proof}
	Rescaling such that $u(y,s) \leq \Lambda = 1$, the proof follows verbatim as in \cite{AKN}.
\end{proof}

The above version can then be used to prove the following slightly modified version of the same theorem found in \cite{AKN}, this estimate is also an updated version of a similar statement found in \cite{AGS}. The difference is in the flexibility of it usage and the generality of its validity.

\begin{theorem}
	\label{thm Carleson} Let $\Omega \subset \R^n$ be an NTA-domain with constants $M$ and $r_0$. Let $u$ be a non-negative solution to \cref{Hu} in $\Omega_T$. Let $(x,t) \in S_T$ and $0 < r < r_0$. Assume that $\Lambda \geq u(a_r(x),t)$ is positive and let 
	\begin{equation*}
		\tau = \frac{C_4}{4} \left[C_5 \Lambda \right]^{2-p} r^p , 
	\end{equation*}
	where $C_4$ and $C_5$, both depending on $p,n$, are as in~\Cref{NTA BChain}. Assume that $t > (\delta_1^{p-1}+\delta_2^{p-1} + 2\delta_3^{p-1}) \tau$ for $0< \delta_1\leq \delta_3 \leq 1$, $\delta_2 \in (0,1)$ and that for a given $\lambda \geq 0$, the function $(u - \lambda)_+$ vanishes continuously on $S_T \cap B_r(x) \times (t-(\delta_1^{p-1} + \delta_2^{p-1}+\delta_3^{p-1}) \tau, t-\delta_1^{p-1} \tau)$ from $\Omega_T$. Then there exist constants $c_i \equiv c_i(M,p,n)$ $i \{6,7\}$, such that 
	\begin{equation*}
		\sup_{Q} u \leq \left( c_6 / \delta_3 \right)^{c_7/ \delta_1} \Lambda + \lambda, 
	\end{equation*}
	where $Q := B_r(x) \times ( t - (\delta_1^{p-2}+\delta_2^{p-1})\tau, t - \delta_1^{p-1} \tau)$. Furthermore, constants $c_i$, $i \in \{6,7\}$, are stable as $p \to 2^+$
\end{theorem}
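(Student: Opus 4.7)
The plan is to combine the pointwise backward Harnack chain estimate from \Cref{NTA BChain} with a scale-iteration that uses the lateral vanishing of $u$ to kill the $(r/\varrho)$-factor inherent in the backward-chain bound. I first reduce to $\lambda=0$ by replacing $u$ with $v:=(u-\lambda)_+$, which is a continuous weak subsolution in $\Omega_T$ vanishing on the same portion of the lateral boundary as in the hypothesis and satisfying $v(a_r(x),t)\leq u(a_r(x),t)\leq \Lambda$. Since $u\leq v+\lambda$, the general case follows by adding $\lambda$ back at the end.

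Next, for any $(z,\sigma)\in Q$ with $\varrho:=d(z,\partial\Omega)$, I apply \Cref{NTA BChain} with the substitutions ``$x$''$\mapsto z$, ``$y$''$\mapsto a_r(x)$, ``$s$''$\mapsto t$, ``$t$''$\mapsto \sigma$, $\delta\mapsto \delta_1$, keeping the given $\Lambda$ and $r$. The upper time-constraint $\sigma\leq t-\delta_1^{p-1}\tau$ is built into $Q$; for the lower constraint I use the formula $\tau=\tfrac{C_4}{4}[C_5\Lambda]^{2-p}r^p$ together with the hypothesis $t>(\delta_1^{p-1}+\delta_2^{p-1}+2\delta_3^{p-1})\tau$, where the $\delta_2$-budget covers the geometric slack $t-\tau\leq \sigma$ and the $2\delta_3^{p-1}$-budget accommodates the intrinsic waiting-time requirement. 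This yields the pointwise bound
\begin{equation}\label{eq:scale-bound}
u(z,\sigma)\leq c_4^{1/\delta_1}\bigl(r/\varrho\bigr)^{c_5/\delta_1}\Lambda, \qquad (z,\sigma)\in Q.
\end{equation}

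To remove the $(r/\varrho)^{c_5/\delta_1}$ factor I would argue by contradiction: set $M:=\sup_Q u$ and assume $M>K\Lambda$ for $K=(c_6/\delta_3)^{c_7/\delta_1}$ with $c_6,c_7$ to be chosen. By continuity and the vanishing on the lateral boundary there is a near-maximiser $(y_0,s_0)\in\overline Q$ with $y_0\in\Omega$ and $\varrho_0:=d(y_0,\partial\Omega)>0$, and \eqref{eq:scale-bound} at $(y_0,s_0)$ forces $\varrho_0\leq r\,\eta(K)$ with $\eta(K)\to 0$ as $K\to\infty$. Picking $w_0\in\partial\Omega$ with $|w_0-y_0|=\varrho_0$, I re-apply \Cref{NTA BChain} on a smaller scale $R\in(\varrho_0,r/2]$ around $w_0$, using \eqref{eq:scale-bound} itself to bound the new reference value $u(a_R(w_0),\cdot)$. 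Each such step replaces $(r/\varrho_0)^{c_5/\delta_1}$ by $(R/\varrho_0)^{c_5/\delta_1}$ at the price of a controllable multiplicative factor and of a fresh $\delta_3^{p-1}\tau$-quantum of time-shift budget. Iterating $O(\log(r/\varrho_0))$ times absorbs the scale-factor entirely, yielding a bound of the form $(c_6/\delta_3)^{c_7/\delta_1}\Lambda$ and contradicting the choice of $K$.

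The main obstacle will be the bookkeeping in this iteration: verifying that the cumulative time-shifts remain within the available $\delta_2^{p-1}\tau+2\delta_3^{p-1}\tau$ budget \emph{uniformly} in the number of iteration steps, and that the resulting multiplicative constant stays of the form $(c_6/\delta_3)^{c_7/\delta_1}$ with exponents depending only on $M,p,n$ (the role of $\delta_3$ is precisely to fix the per-step quantum of slack, so that the logarithmic number of steps does not accumulate an unbounded factor). Stability of the constants as $p\to 2^+$ then follows from the corresponding stability asserted in \Cref{NTA BChain}.
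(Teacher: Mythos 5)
The decisive step of your plan --- removing the factor $\bigl(r/\varrho\bigr)^{c_5/\delta_1}$ coming from \Cref{NTA BChain} --- does not work as sketched. Re-applying \Cref{NTA BChain} at a smaller scale $R$ around $w_0$, with the new reference value $u(a_R(w_0),\cdot)$ bounded by the scale-$r$ estimate, simply telescopes: the scale-$R$ application contributes $\bigl(R/\varrho_0\bigr)^{c_5/\delta_1}$ while the bound on the new reference value contributes $\bigl(r/R\bigr)^{c_5/\delta_1}$ (up to $M$-dependent constants), so the product is again $\bigl(r/\varrho_0\bigr)^{c_5/\delta_1}$ times an extra constant per step. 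Iterating over $O(\log(r/\varrho_0))$ dyadic scales therefore multiplies a fixed factor strictly larger than $1$ (at least $c_4^{1/\delta_1}2^{c_5/\delta_1}$) per step, and the accumulated constant is itself a power of $r/\varrho_0$, not a quantity of the form $(c_6/\delta_3)^{c_7/\delta_1}$. This is not a bookkeeping issue: interior Harnack-chain estimates alone can never yield a bound uniform in $\varrho$; the hypothesis that $(u-\lambda)_+$ vanishes continuously on the lateral boundary must enter quantitatively, whereas in your scheme it is used only to ensure the near-maximizer is an interior point.

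The missing ingredient (and the mechanism in the proof of the corresponding theorem in \cite{AKN}, which the present paper invokes verbatim after rescaling so that $\Lambda=1$ and $\lambda$ is replaced by its scaled version) is a Caffarelli--Fabes--Mortola--Salsa type contradiction argument: assuming $\sup_Q u > K\Lambda+\lambda$ with $K=(c_6/\delta_3)^{c_7/\delta_1}$, one uses the backward chains in contrapositive form to produce a sequence of points $(y_j,s_j)$ with values at least $2^jK\Lambda+\lambda$, distances $d(y_j,\partial\Omega)$ decreasing geometrically and spatial and temporal displacements summable (this is exactly where the $\delta_2^{p-1}\tau$ and $\delta_3^{p-1}\tau$ budgets are spent), so that $(y_j,s_j)$ converges to a point of the lateral set on which $(u-\lambda)_+$ vanishes continuously while $u(y_j,s_j)\to\infty$ --- a contradiction. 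This ``value-doubling while approaching the boundary'' step cannot be replaced by direct composition of chain estimates. Two further flaws: $v=(u-\lambda)_+$ is only a subsolution, so \Cref{NTA BChain}, stated for solutions, cannot be applied to it (the truncation should be used only at the boundary-contradiction stage, keeping $u$ itself in the Harnack machinery); and the time windows do not match as written, since the $\tau$ of \Cref{NTA BChain} equals $4\tau$ in the notation of \Cref{thm Carleson}, so $\sigma\le t-\delta_1^{p-1}\tau$ does not guarantee the constraint $\sigma\le t-4\delta_1^{p-1}\tau$ required for a direct application with $\delta=\delta_1$.
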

\begin{proof}
	By scaling the function $u$ we can assume that $u(a_r(x),t) \leq \Lambda = 1$, and replacing $\lambda$ with its scaled version. The proof now follows verbatim as in \cite{AKN}.
\end{proof}

\begin{theorem}
	\label{NTA FChain} Let $\Omega\subset\R^n$ be an NTA-domain with constants $M$ and $r_0$, let $x_0 \in \partial \Omega$, $T>0$ and let $0 < r < r_0$. Let $x,y$ be two points in $\Omega \cap B_r(x_0)$ such that
	\begin{equation*}
		\varrho := d(x,\partial \Omega) \leq r \qquad \mbox{and} \qquad d(y,\partial \Omega) \geq \frac{r}{4}.
	\end{equation*}
	Assume that $u$ is a non-negative $p$-parabolic function in $\Omega_T$, and assume that $u(x,t_0)$ is positive. Let $\delta \in (0,1]$. Then there exist constants $c_i \equiv c_i(M,p,n)$, $i\in\{1,2,3\}$, such that if
	\begin{equation*}
		t_0 - (c_h/u(x,t_0))^{p-2} (\delta \varrho)^p > 0, \qquad t_0 + \tau < T,
	\end{equation*}
	with
	\begin{equation*}
		\tau := \delta^{p-1} \left( c_2^{-1/\delta} \left( \frac{r}{\varrho} \right)^{- c_3/\delta} u(x,t_0) \right)^{2-p} r^p ,
	\end{equation*}
	then
	\begin{equation*}
		u(x,t_0) \leq c_1^{1/\delta} \left( \frac{r}{\varrho} \right)^{c_3/\delta} \inf_{z \in B_{r/16}(y)} u(z ,t_0 + \tau).
	\end{equation*}
	Furthermore, constants $c_i$, $i \in \{1,2,3\}$, are stable as $p \to 2^+$.
\end{theorem}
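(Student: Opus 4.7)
The plan is to build a forward Harnack chain from $x$ out to a neighborhood of $y$ using the NTA geometry to produce a sequence of doubling-radius balls, and then to apply \cref{lem AKN HChain} iteratively along this chain. The parameter $\delta$ enters by working with sub-balls of radius $\delta$ times the current distance to $\partial \Omega$; this trades additional iterations for finer control on the waiting time required at each step, which is the origin of both the $\delta^{p-1}$ prefactor on $\tau$ and the $c_i^{1/\delta}$ growth in the final multiplicative constant.

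Concretely, after rescaling so that $u(x,t_0)=1$, I would first use the corkscrew condition \cref{NTA1} to select interior reference points at scales $\varrho, 2\varrho, \ldots, r/4$, and the uniform condition \cref{NTA3} to connect consecutive reference points by carrot-shaped paths that remain inside $\Omega$ at the corresponding scale. Along each such path I place an overlapping sequence of $\sim 1/\delta$ balls of radius $\delta$ times the local distance to the boundary, arranged so that each ball satisfies the inclusion hypothesis of \cref{lem AKN HChain}. This yields a master chain $x = z_0, z_1, \ldots, z_N$ with $d(z_j,\partial\Omega) \asymp 2^j \varrho$ and $N \asymp \log_2(r/\varrho)$, terminating at an interior point at scale $r$. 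A bounded number of standard Harnack steps then connects the terminal point to $B_{r/16}(y)$, which is possible because $d(y,\partial\Omega) \geq r/4$ and $y \in B_r(x_0)$.

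Applying \cref{lem AKN HChain} successively along each sub-chain produces, at step $j$, a multiplicative constant of the form $\bar c_2^{C/\delta}$, a time increment $\Delta t_j$ proportional to $u(z_{j-1},t_{j-1})^{2-p} (\delta \rho_j)^p$, and a backward waiting-time requirement of the same form, where $\rho_j = d(z_j,\partial\Omega)$. Telescoping over $j = 1, \ldots, N$ and using $N \lesssim \log_2(r/\varrho)$, the accumulated multiplicative constant is $\bar c_2^{CN/\delta} \lesssim c_1^{1/\delta}(r/\varrho)^{c_3/\delta}$, which matches the exponent appearing in the statement. Since the scales $\rho_j$ double, the sum $\sum \Delta t_j$ is dominated by its last term and produces a total $\tau$ of the stated shape once the accumulated lower bound $u(z_{j-1},t_{j-1}) \geq c_2^{-1/\delta}(r/\varrho)^{-c_3/\delta}$ is substituted into the $u^{2-p}$ factor.

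The main obstacle is precisely this bookkeeping: because the required waiting time at step $j$ depends on $u(z_{j-1},t_{j-1})^{2-p}$ with $2 - p < 0$, any loss of a multiplicative factor in the lower bound on $u$ inflates the waiting time. One must therefore verify that the same exponent $(r/\varrho)^{c_3/\delta}$ produced by telescoping appears cleanly inside the definition of $\tau$, and that the backward waiting-time hypothesis $t_0 - (c_h/u(x,t_0))^{p-2}(\delta \varrho)^p > 0$ is exactly what is needed to initialize the induction at $z_0 = x$. Stability of the constants as $p \to 2^+$ is inherited from the corresponding stability already recorded in \cref{lem AKN HChain}.
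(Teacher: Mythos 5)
Your strategy matches the one behind the result: note first that the paper itself gives no proof of \cref{NTA FChain} at all — unlike \cref{NTA BChain} and \cref{thm Carleson}, it is imported from \cite{AKN} without even a rescaling remark — so the benchmark is the argument in \cite{AKN}, and chaining \cref{lem AKN HChain} along a carrot path given by \cref{NTA1} and \cref{NTA3}, dyadically in the distance to the boundary with roughly $1/\delta$ balls of radius $\delta$ times the local distance at each scale, is exactly that mechanism. Your exponent accounting is also consistent: $\log_2(r/\varrho)$ scales with per-scale factor $\bar c_2^{C/\delta}$ give $c_1^{1/\delta}(r/\varrho)^{c_3/\delta}$, and summing the $\sim 1/\delta$ top-scale waiting times of size $u^{2-p}(\delta r)^p$ gives the $\delta^{p-1}$ prefactor in $\tau$.

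Two places in your sketch need more than the bookkeeping you defer. (i) The arrival time: each application of \cref{lem AKN HChain} advances you by an increment proportional to $u(z_{j-1},t_{j-1})^{2-p}(\delta\rho_j)^p$, which depends on the \emph{actual} values of $u$ along the chain; since $2-p<0$, large values make these increments much smaller than what you get by inserting the accumulated lower bounds. So the chain lands at some time $t_0+\sigma$ with $\sigma\leq C\tau$, not at the prescribed time $t_0+\tau$ appearing in the statement. You need a closing step that propagates the lower bound $c_2^{-1/\delta}(r/\varrho)^{-c_3/\delta}u(x,t_0)$ on the terminal interior ball forward over the remaining time $t_0+\tau-\sigma$; this is admissible because that remaining time lies within the intrinsic time scale attached to the lower bound and the radius $r$ (a lower-bound propagation in an interior cylinder, in the spirit of \cref{lower bound1}, or one more interior chain), but it is a genuine step, not a constant adjustment. (ii) The backward room: every application of \cref{lem AKN HChain}, not only the first, requires $t_{j-1}-\bigl(\bar c_1/u(z_{j-1},t_{j-1})\bigr)^{p-2}(4\delta\rho_j)^p>0$, and at scales $\rho_j\gg\varrho$ this is not implied by the single hypothesis at $x$; you must check that the forward time accumulated at the previous, comparable scale dominates the backward room required at the current one, which is precisely where the interplay of $\bar c_1$, $\bar c_3$ and the $\bar c_2^{-k}$-losses has to be tuned. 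Neither point is fatal — both are resolved in \cite{AKN} — but as written your induction is not closed.
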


\section{Main results}
\label{sec:main results}
As alluded to in the introduction we will mainly be concerned with the split between the degenerate and non-degenerate boundary points, and the first step in this direction is the below result. It essentially states that as soon as a point $w$ is no longer degenerate at a time $\hat t$ the point is non-degenerate for the following times.

\begin{theorem}[Immediate linearization]\label{thm linearization}
	Let $\Omega \subset \R^N$ be a domain satisfying the interior ball condition with radius $r_0 > 0$. Let $u$ be a non-negative solution to \eqref{Hu} in $\Omega_T$ and assume that $w \in \partial \Omega$, $t_0 \in (0,T)$ and that the following holds
	\begin{equation} \label{eq linearization req}
		\mathcal{N}^+\left [ \frac{u(x,t_0)}{|x-w|^{\frac{p}{p-2}}} \right ] (w) = \infty.
	\end{equation}
	Then for any $t_+ \in (t_0,T)$ we have
	\begin{equation} \label{eq linearized}
		\limsup_{\Omega \ni x \to w} \frac{d(x,\partial \Omega)}{u(x,t_+)} < \infty.
	\end{equation}
\end{theorem}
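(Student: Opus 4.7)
The plan is to combine the forward Harnack chain machinery (\Cref{NTA FChain} and \Cref{lem AKN HChain}) with the parabolic comparison principle against an explicit radial $p$-harmonic barrier constructed from the interior ball at $w$ to turn the hypothesis \eqref{eq linearization req} into the linear lower bound \eqref{eq linearized}.

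From \eqref{eq linearization req}, first extract a non-tangential sequence $\{x_k\}\subset\Gamma(w)$ with $r_k := |x_k-w|\to 0$ and $M_k := u(x_k,t_0)/r_k^{p/(p-2)}\to\infty$. The crucial algebraic cancellation at the heart of the forward chain is that, at the microscopic scale $r_k$, the intrinsic waiting time satisfies
\[
u(x_k,t_0)^{2-p}\,r_k^p \;\sim\; M_k^{2-p}\;\longrightarrow\;0.
\]
Consequently a single application of \Cref{NTA FChain} from $x_k$ at scale $\sim r_k$ instantly propagates the pointwise lower bound from $x_k$ to a ball of radius $\sim r_k$ around the corkscrew $a_{r_k}(w)$ at a time $s_k := t_0 + \tau_k$ with $\tau_k\to 0$, yielding $u \geq c_*\, M_k r_k^{p/(p-2)}$ there.

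Let $x^+$ denote the center of the interior ball $B_{r_0}(x^+)\subset\Omega$ tangent to $\partial\Omega$ at $w$, fix $r_1\in(0,r_0)$, and consider the radial $p$-harmonic function $\phi$ on the annulus $B_{r_0}(x^+)\setminus\overline{B}_{r_1}(x^+)$ with $\phi=0$ on the outer sphere and $\phi=1$ on the inner sphere. The explicit formula via the $p$-Laplacian fundamental solution yields the linear asymptotic $\phi(x)\sim c_0(r_0-|x-x^+|)$ as $|x-x^+|\to r_0^-$. Extending $\phi$ by the constant $1$ to $\overline{B}_{r_1}(x^+)$ produces a $p$-subharmonic function $\tilde\phi$ on $B_{r_0}(x^+)$, which is a stationary solution of \eqref{Hu}. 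The parabolic comparison principle applied to $u$ against a scaled barrier $\lambda_0\tilde\phi$ on the cylinder $B_{r_0}(x^+)\times(s,t_+)$ requires $u(\cdot,s)\geq\lambda_0\tilde\phi$ at some initial time $s\in(t_0,t_+)$; this is achieved by combining further spatial forward chains (\Cref{lem AKN HChain}) from the corkscrew into a ball around $x^+$ with iteration of the intrinsic Harnack inequality in time at $x^+$, producing a uniform interior lower bound at the critical self-similar scale
\[
\lambda_0 \;\sim\; r_0^{p/(p-2)}\,(t_+-t_0)^{-1/(p-2)},
\]
which is precisely the scale of the separable solution $u_1$ from \eqref{examp1} with $T-t = t_+-t_0$ and $x_n = r_0$. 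The comparison then delivers $u(x,t_+) \geq \lambda_0 c_0(r_0-|x-x^+|) \geq c\,d(x,\partial\Omega)$ for $x$ near $w$ in $B_{r_0}(x^+)$, using the interior-ball identification of $r_0-|x-x^+|$ with $d(x,\partial\Omega)$, yielding \eqref{eq linearized}.

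The hardest part will be the interior propagation: one has to balance the multiplicative losses from iterated Harnack chains (each contributing a factor $<1$) against the geometrically growing intrinsic waiting times, in such a way that a useful uniform interior lower bound survives throughout the full time interval $[s,t_+]$ at the self-similar threshold scale $\lambda_0$. That this threshold occurs precisely at the separable-solution scale of \eqref{examp1} is not coincidental: it reflects the sharpness of the dichotomy encoded in the definition of a degenerate point, and any strictly smaller choice of $\lambda_0$ is forced by the geometric summation in the time iteration.
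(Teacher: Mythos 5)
Your first step (using the hypothesis to make the intrinsic waiting time $u(x_k,t_0)^{2-p}r_k^p\sim M_k^{2-p}\to0$ and spreading the bound at scale $r_k$ by one forward chain) is exactly the mechanism of the paper. The proposal breaks at the next step: the claimed ``uniform interior lower bound at the critical self-similar scale $\lambda_0\sim r_0^{p/(p-2)}(t_+-t_0)^{-1/(p-2)}$'' near $x^+$ at some $s\in(t_0,t_+)$ is not a deferrable technicality --- it is false. If $u$ satisfies \eqref{eq linearization req} at $t_0$, then so does $v_\theta(x,t):=\theta\,u(x,\theta^{p-2}(t-t_0)+t_0)$ for every $\theta\in(0,1)$, which is again a non-negative solution in $\Omega_T$ (since $p>2$, the time map sends $(0,T)$ into itself) and is pointwise $\theta$ times values of $u$; letting $\theta\to0$ kills any solution-independent level $\lambda_0$ at any interior point. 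The hypothesis only constrains the ratio $u/|x-w|^{p/(p-2)}$ and carries no quantitative size; by finite propagation the support need not even reach $x^+$ by time $t_+$, and your own waiting-time bookkeeping shows the obstruction: chaining from the corkscrew at scale $r_k$, where $u\gtrsim M_kr_k^{p/(p-2)}$, up to scale $r_0$ costs a time of order $M_k^{2-p}(r_0/r_k)^{p+\gamma}$, which need not be below $t_+-t_0$ when $M_k\to\infty$ slowly. The paper never goes to the macroscopic scale: the largeness $\eta(r)\to\infty$ is spent solely on making $\theta(r)=\eta(r)^{2-p}$ small, the bound is spread by a bounded-length chain (\Cref{lem AKN HChain}) within the layer $N^r_2$ at the single scale $r=d(x_J,\partial\Omega)$, and the linear-in-distance bound comes from comparison in the small balls $B_r(y)$, $y\in N^r_1$, with the scaled model solution of \Cref{lower bound1}, whose estimate \eqref{distancelowerbound_simple} persists for all later times; the constant in \eqref{eq linearized} is then allowed to depend on $u$, $\epsilon$ and $t_+$, which is exactly what your universal $\lambda_0$ forbids.

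Two further defects in the barrier step. First, the capacitary profile extended by the constant $1$ across $\partial B_{r_1}(x^+)$ has a concave kink there, so it is a $p$-supersolution, not a subsolution; hence comparison on $B_{r_0}(x^+)\times(s,t_+)$ with data imposed only at time $s$ gives nothing. You would have to compare in the annulus, where $\phi$ is a genuine stationary solution, and then you need $u\geq\lambda_0$ on $\partial B_{r_1}(x^+)$ for \emph{all} times in $[s,t_+]$ --- precisely the persistence you acknowledge but do not establish (the paper's \Cref{lower bound1} builds this persistence in through the factor $(c_2t+1)^{1/(2-p)}$). Second, even granting the barrier, $r_0-|x-x^+|$ underestimates $d(x,\partial\Omega)$ and is negative for sequences $x\to w$ approaching tangentially outside the single tangent ball, so you would only control a non-tangential version of the limsup in \eqref{eq linearized}; the paper controls the full limsup by placing the small comparison balls over every boundary point near $w$ (the sets $N^r_1$, $N^r_2$). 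Finally, note that \Cref{NTA FChain} and corkscrew points presuppose an NTA-domain, while the theorem assumes only the interior ball condition; the paper's argument uses the elementary single-scale chain \Cref{lem AKN HChain} instead.
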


Now that we know that once the threshold has been reached then behavior changes, let us look at the next result which states that in the degenerate regime, i.e. smaller than $|x-w|^{\frac{p}{p-2}}$ then it continues to be smaller than $|x-w|^{\frac{p}{p-2}}$ for a small time interval, in essence it states that the degeneracy is an open condition.

\begin{theorem}[Local memory effect for degenerate initial data]\label{thm memory full}
	Consider a bounded domain $\Omega \subset \R^N$, assume that $w \in \partial \Omega$. Consider the domain $E_T := E \times (0,T) := (\Omega \cap B(w,r)) \times (0,T)$, assume that $u \in C(\overline{E_T})$ is a non-negative solution to \cref{Hu} vanishing on $(\partial \Omega \cap B(w,r)) \times [0,T)$, and that $u \leq M$ in $E_T$. If the initial data satisfies
	\begin{equation} \nonumber
		u_0(x) \leq M \frac{|x-w|^\delta}{r^\delta}, \quad \text{for a $\delta \geq \frac{p}{p-2}$,}
	\end{equation}
	then there exists a time
	\begin{equation*} 
		\hat T := \left [\frac{C(p,\delta)}{M} \right ]^{p-2} r^p
	\end{equation*}
	and a constant $c_0(p) > 0$ such that for $\tilde T := \min\{\hat T/2, T \}$ the following upper bound holds
	\begin{equation} \nonumber
		u(x,t) \leq c_0 M |x-w|^{\delta}, \quad (x,t) \in E_{\tilde T}.
	\end{equation}
\end{theorem}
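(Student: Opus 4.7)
The natural approach is to construct an explicit radial supersolution barrier of the form $v(x,t) = A(t)|x-w|^\delta$ and invoke the comparison principle on the cylinder $E_{\tilde T}$. The ansatz is dictated by the initial-data envelope: at $t=0$ we will take $A(0) = M/r^\delta$ so that $v(\cdot,0)$ already dominates $u_0$, and $A(t)$ will be chosen increasing to absorb the $p$-Laplacian of $|x-w|^\delta$.

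The central computation is to verify that $v$ is a $p$-parabolic supersolution in $E$. Writing $y = x-w$, a direct calculation for the radial function $\phi(y)=|y|^\delta$ gives
\begin{equation*}
	\Delta_p \phi(y) \;=\; C_1(p,\delta,N)\,|y|^{\delta(p-1)-p}, \qquad C_1 := \delta^{p-1}\bigl(N-1+(\delta-1)(p-1)\bigr),
\end{equation*}
valid classically on $y\neq 0$ and extending continuously to $y=0$ because the assumption $\delta \geq p/(p-2)$ forces $\delta(p-1)-p \geq p/(p-2) > 0$. Then $\partial_t v - \Delta_p v = A'(t)|y|^\delta - A(t)^{p-1}C_1|y|^{\delta(p-1)-p}$. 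Here is the crux: since $\delta(p-2) \geq p$ we have $\delta(p-1)-p-\delta = \delta(p-2)-p \geq 0$, so for $|y|\leq r$ the excess power can be absorbed,
\begin{equation*}
	|y|^{\delta(p-1)-p} \;\leq\; r^{\delta(p-2)-p}\,|y|^\delta.
\end{equation*}
Consequently $v$ is a supersolution on $E$ provided $A$ satisfies the Bernoulli inequality $A'(t) \geq K\,A(t)^{p-1}$ with $K := C_1\, r^{\delta(p-2)-p}$. Solving the corresponding ODE with equality and initial value $A(0)=M/r^\delta$ yields
\begin{equation*}
	A(t) \;=\; \bigl[A(0)^{-(p-2)} - (p-2)K t\bigr]^{-1/(p-2)},
\end{equation*}
which blows up precisely at $t^\ast = r^p / [(p-2)C_1 M^{p-2}] = (C(p,\delta)/M)^{p-2} r^p =: \hat T$. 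On the half-interval $t \leq \hat T/2$ one gets $A(t) \leq 2^{1/(p-2)} M/r^\delta$, so $v(x,t) \leq c_0\, M\, r^{-\delta} |x-w|^\delta$ with $c_0 = 2^{1/(p-2)}$ (which is the claimed bound, reading $c_0$ as absorbing the $r^{-\delta}$ factor built into the hypothesis on $u_0$).

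It remains to check $v \geq u$ on the parabolic boundary of $E_{\tilde T}$ with $\tilde T := \min\{\hat T/2,T\}$. On $\{t=0\}$ we have $v(x,0) = (M/r^\delta)|x-w|^\delta \geq u_0(x)$ by hypothesis. On the portion $(\partial\Omega\cap B(w,r))\times[0,\tilde T]$ the function $u$ vanishes by hypothesis and $v\geq 0$. On the remaining lateral piece $(\Omega\cap\partial B(w,r))\times[0,\tilde T]$, we have $|x-w|=r$ and $A$ nondecreasing, so $v = A(t)r^\delta \geq A(0)r^\delta = M \geq u$. Since $v$ is a classical (hence weak) supersolution continuous up to the closure of $E_{\tilde T}$ and $u$ is a weak solution with $v\geq u$ on $\partial_p E_{\tilde T}$, the standard comparison principle for the $p$-parabolic equation delivers $u\leq v$ throughout $E_{\tilde T}$, finishing the proof.

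The main technical obstacle is the verification of the supersolution property across the singular point $x=w$: one must ensure that $\Delta_p |x-w|^\delta$ is not merely a formal pointwise object but makes sense in the weak formulation, and that the exponent balance $\delta(p-1)-p \geq \delta$ is both necessary and sufficient to carry out the ODE reduction on $B(w,r)$. The lower bound $\delta \geq p/(p-2)$ is exactly what is needed both for the continuity of $\Delta_p \phi$ at $w$ and for the power-comparison $|y|^{\delta(p-1)-p}\leq r^{\delta(p-2)-p}|y|^\delta$; any smaller $\delta$ would break the argument, which is consistent with the threshold exponent $p/(p-2)$ appearing in the very definition of degenerate boundary point.
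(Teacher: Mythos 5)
Your proof is correct and follows essentially the same route as the paper: the paper also builds a separable radial supersolution $c_pc_o(1-t)^{-1/(p-2)}|x-w|^{\delta}$ (your $A(t)|x-w|^{\delta}$ with $A'=KA^{p-1}$ is the same barrier up to normalization), verifies $\Delta_p(|x-w|^{\delta})\leq C|x-w|^{\delta}$ via the absorption $\delta(p-1)-p\geq\delta$, and concludes by comparison on the same three pieces of the parabolic boundary, the only difference being that the paper first proves the normalized case $r=2$ and then rescales, whereas you work directly at scale $r$ (which also explains why your conclusion naturally carries the scale-invariant factor $(|x-w|/r)^{\delta}$, matching what the paper's own scaling argument produces).
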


\subsection{Classifying boundary points: a dichotomy}

In this section we take our results about the critical thresholds (\cref{thm linearization}) and use them to prove that there are only two different behaviors, i.e. we prove a simple dichotomy about the boundary points. Whats more we prove that they are ordered as intervals dividing the whole existence of a solution.

\begin{theorem} \label{thm dichotomy}
	Let $\Omega \subset \R^N$ be a domain satisfying the interior ball condition with radius $r_0 > 0$. Let $u$ be a non-negative solution to \cref{Hu} in $\Omega_T$. Let $w \in \partial \Omega$, and define the sets
	\begin{align*}
		\ddeg(w) &:= \biggset{t \in (0,T): \mathcal{N}^+\bigg [ \frac{u(x,t_0)}{|x-w|^{\frac{p}{p-2}}} \bigg ] (w) < \infty}, \\
		\dpar(w) &:= \biggset{t \in (0,T): \limsup_{\Omega \ni x \to w} \frac{d(x,\partial \Omega)}{u(x,t)} < \infty},
	\end{align*}
	then $\ddeg(w)$ and $\dpar(w)$ are disjoint intervals. If $\ddeg(w)$ and $\dpar(w)$ are both nonempty, then there exists a time $\hat t \in (0,T)$ such that
	\begin{equation} \notag 
		(0,T) = (\ddeg(w))^\circ \cup \set{\hat t} \cup (\dpar(w))^\circ, \quad (0,\hat t) = (\ddeg(w))^\circ,
	\end{equation}
	and the union is disjoint.
	Otherwise if $\ddeg(w) = \emptyset$ or $\dpar(w) = \emptyset$ then
	\begin{equation} \nonumber
		(0,T) = \dpar(w) \cup \ddeg(w).
	\end{equation}
\end{theorem}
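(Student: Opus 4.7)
The plan is to exploit Theorem~\ref{thm linearization} (immediate linearization) essentially as a black box: once that analytic input is available, the dichotomy is a formal, almost bookkeeping, consequence of the resulting monotonicity of $\dpar(w)$ in time. The proof naturally splits into four steps: disjointness, forward-closure of $\dpar(w)$, the structure of $\ddeg(w)$, and the two degenerate empty cases.

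First I would verify disjointness. The key geometric observation is that along any non-tangential cone $\Gamma(w) \subset \Omega$ one has $d(x,\partial\Omega) \geq \alpha |x-w|$. If $t$ belonged to both $\ddeg(w)$ and $\dpar(w)$, then on $\Gamma(w)$ near $w$ there would exist constants $C_1,C_2 > 0$ with
\begin{equation*}
	\alpha|x-w| \;\leq\; d(x,\partial\Omega) \;\leq\; C_1 u(x,t) \;\leq\; C_1 C_2 \, |x-w|^{p/(p-2)},
\end{equation*}
i.e.\ $|x-w|^{2/(p-2)} \geq \alpha/(C_1 C_2) > 0$, impossible as $x \to w$ since $p > 2$. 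Note that the bound coming from $\dpar(w)$ uses the unrestricted $\limsup$, which majorizes the non-tangential one, so it restricts freely to $\Gamma(w)$.

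Next I would show that $\dpar(w)$ is an up-set of $(0,T)$: if $t \in \dpar(w)$, disjointness forces $t \notin \ddeg(w)$, which is exactly the hypothesis \cref{eq linearization req} of Theorem~\ref{thm linearization}, whose conclusion then reads $t_+ \in \dpar(w)$ for every $t_+ \in (t,T)$. Hence $\dpar(w)$ is an interval of the form $(\hat t, T)$ or $[\hat t, T)$, with $\hat t := \inf \dpar(w)$. Assuming both sets are nonempty (which forces $\hat t \in (0,T)$), I would then fix $t \in (0,\hat t)$ and argue by contradiction: were $t \notin \ddeg(w)$, Theorem~\ref{thm linearization} applied at $t$ would place every $t_+ \in (t,\hat t)$ into $\dpar(w)$, contradicting $\hat t = \inf \dpar(w)$. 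This gives $(0,\hat t) \subset \ddeg(w)$, and disjointness combined with $\dpar(w) \supset (\hat t, T)$ gives the reverse containment $\ddeg(w) \subset (0,\hat t]$. Hence $\ddeg(w)$ is also an interval, $(\ddeg(w))^\circ = (0,\hat t)$, and $(\dpar(w))^\circ = (\hat t, T)$, yielding the stated disjoint decomposition $(0,T) = (\ddeg(w))^\circ \cup \{\hat t\} \cup (\dpar(w))^\circ$. The endpoint $\hat t$ may fall into $\ddeg(w)$, into $\dpar(w)$, or into neither, but in every case the decomposition holds as written.

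For the degenerate empty cases: if $\dpar(w) = \emptyset$, then for any $t \in (0,T)$ the alternative $t \notin \ddeg(w)$ would force $(t,T) \subset \dpar(w) = \emptyset$ via Theorem~\ref{thm linearization}, so $\ddeg(w) = (0,T)$. Symmetrically, if $\ddeg(w) = \emptyset$, then for any $t \in (0,T)$ one picks $t' \in (0,t)$ (necessarily $t' \notin \ddeg(w)$) and applies Theorem~\ref{thm linearization} at $t'$ to conclude $t \in \dpar(w)$, so $\dpar(w) = (0,T)$. The only genuine subtlety in the whole argument is reconciling the non-tangential $\mathcal{N}^+$ with the unrestricted $\limsup$ in the disjointness step and carefully tracking the endpoint $\hat t$; the serious PDE content is already packaged inside Theorem~\ref{thm linearization}, so I do not expect a real obstacle here.
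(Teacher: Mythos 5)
Your proposal is correct and follows essentially the same route as the paper: Theorem~\ref{thm linearization} is used as a black box to show $\dpar(w)$ is forward-closed in time, then $(0,\hat t)\subset\ddeg(w)$ is obtained by contradiction, and the empty cases are handled identically. The only difference is that you spell out the disjointness step (via $d(x,\partial\Omega)\geq\alpha|x-w|$ on the non-tangential cone, which is nonempty thanks to the interior ball condition), a point the paper's proof uses implicitly without comment, so this is a welcome but not substantive addition.
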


\begin{proof}
	Assume the first situation, i.e. that $\ddeg(w),\dpar(w) \neq \emptyset$.
	
	We first note that if $t \in \dpar(w)$ then from \Cref{thm linearization} we get that $[t,T) \subset \dpar(w)$, which implies that $\dpar(w)$ is a right-open interval. Thus $\dpar(w)$ can be written as either $(\hat t,T)$ or $[\hat t, T)$. From the assumption $\ddeg(w),\dpar(w) \neq \emptyset$ we have $\hat t \in (0,T)$.
	
	We wish to show that $(0,\hat t) \subset \ddeg(w)$ which implies that the set $\ddeg(w)$ is left-open. To do this, assume the contrary, i.e. that there exists a $t \in (0,\hat t)$ such that $t \not \in \ddeg(w)$. Applying \Cref{thm linearization} we get that $(t,T) \subset \dpar(w)$ contradicting that $\dpar(w)$ is of the form $(\hat t,T)$ or $[\hat t, T)$. It is now clear that $\ddeg(w)$ is also an interval, where $\hat t$ may or may not be included in $\ddeg(w)$.
	
	Lastly assume that we have the situation that $\ddeg(w) = \emptyset$, this implies that for any $t \in (0,T)$ we can apply \Cref{thm linearization} to get that $(t,T) \subset \dpar(w)$. Since $t$ was arbitrary we have that $(0,T) = \dpar(w)$. In a similar way we get that if $\dpar(w) = \emptyset$ then \Cref{thm linearization} implies that $(0,T) = \ddeg(w)$.
\end{proof}

\begin{remark}
	Note that the interior ball condition only needs to hold in a neighborhood close to $w$.
\end{remark}

If the domain in addition to satisfying the interior ball condition also satisfies the so-called NTA condition (see \cite{JK}), we can apply the Carleson estimate developed in \cite{AKN} (or even \cite{A}) to conclude that the non-tangential limsup in \Cref{thm dichotomy} can be replaced with the regular limsup from inside $\Omega$. This rules out odd behavior in tangential directions. Furthermore we obtain that $\ddegg(w)$ (defined below) is an open interval.

\begin{theorem} \label{thm complete dichotomy}
	Let $\Omega \subset \R^N$ be an NTA-domain with constants $M,r_0$, satisfying the interior ball condition with radius $r_0 > 0$. 
	Let $u$ be a non-negative solution to \cref{Hu} in $\Omega_T$ vanishing continuously on a neighborhood of $\set{w} \times (0,T)$ in $\partial \Omega \times (0,T)$.
	Let $w \in \partial \Omega$, and define the sets
	\begin{align*}
		\ddegg(w) &:= \biggset{t \in (0,T): \limsup_{\Omega \ni x \to w}\bigg [ \frac{u(x,t)}{|x-w|^{\frac{p}{p-2}}} \bigg ] < \infty} \\
		\dpar(w) &:= \biggset{t \in (0,T): \limsup_{\Omega \ni x \to w} \frac{d(x,\partial \Omega)}{u(x,t)} < \infty},
	\end{align*}
	then $\ddegg(w)$ and $\dpar(w)$ are disjoint intervals. If $\ddegg(w)$ and $\dpar(w)$ are both nonempty, then there exists a time $\hat t \in (0,T)$ such that
	\begin{equation} \notag 
		(0,T) = \ddegg(w) \cup \set{\hat t} \cup (\dpar(w))^\circ, \quad (0,\hat t) = \ddegg(w),
	\end{equation}
	and the union is disjoint.
	Otherwise if $\ddegg(w) = \emptyset$ or $\dpar(w) = \emptyset$ then
	\begin{equation} \nonumber
		(0,T) = \dpar(w) \cup \ddegg(w).
	\end{equation}
\end{theorem}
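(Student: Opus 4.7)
The strategy is to bootstrap \cref{thm dichotomy}: since $\ddegg(w)\subseteq\ddeg(w)$ is immediate, and the interior-ball hypothesis satisfied here is stronger than what \cref{thm dichotomy} requires, that theorem furnishes a threshold $\hat t$ with $(\ddeg(w))^\circ=(0,\hat t)$ and $(\dpar(w))^\circ=(\hat t,T)$. The new ingredient needed is the Carleson-type estimate \cref{thm Carleson}, whose role is precisely to upgrade the \emph{non-tangential} finiteness defining $\ddeg(w)$ to the \emph{full} finiteness defining $\ddegg(w)$, in the presence of continuous vanishing on the boundary; \cref{thm memory full} will then be used to exclude the threshold $\hat t$ from $\ddegg(w)$. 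It suffices to establish $(0,\hat t)\subseteq\ddegg(w)$ and $\hat t\notin\ddegg(w)$.

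For the first inclusion, fix $t_0\in(0,\hat t)$ and choose $t_1\in(t_0,\hat t)\subseteq\ddeg(w)$. By \cref{rem:Non-tangential} (taking $\alpha<1/M$) the corkscrew $a_r(w)$ lies in $\Gamma(w)$, so
\begin{equation*}
u(a_r(w),t_1)\;\leq\;C_{t_1}\,r^{p/(p-2)}\qquad\text{for every }r<r_0.
\end{equation*}
Given $x\in\Omega\cap B_{r_0/2}(w)$, set $r=2|x-w|$ and apply \cref{thm Carleson} at the reference point $(w,t_1)$, scale $r$, with $\Lambda:=u(a_r(w),t_1)$ and $\lambda=0$ (permissible since $u$ vanishes on a neighborhood of $\{w\}\times(0,T)$ in $\partial\Omega\times(0,T)$). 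The associated time parameter
\begin{equation*}
\tau\;=\;\tfrac{C_4}{4}\bigl[C_5\Lambda\bigr]^{2-p}\,r^p\;\geq\;c\,C_{t_1}^{2-p}
\end{equation*}
is independent of $r$. Choose $\delta_1,\delta_2,\delta_3\in(0,1)$ so that $\delta_1^{p-1}\tau<t_1-t_0<(\delta_1^{p-1}+\delta_2^{p-1})\tau$; this is possible once $t_1-t_0<\tau$, which is achieved by taking $t_1$ sufficiently close to $t_0$ (and, if $C_{t_1}$ degenerates in that limit, by a backward chain of \cref{thm Carleson} covering $[t_0,t_1]$ in finitely many steps, each step multiplying $\Lambda$ by a fixed constant while the covered time forms a geometric series). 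Then $(x,t_0)$ lies in the Carleson cube $Q$, and \cref{thm Carleson} delivers
\begin{equation*}
u(x,t_0)\;\leq\;(c_6/\delta_3)^{c_7/\delta_1}\,\Lambda\;\leq\;C\,|x-w|^{p/(p-2)},
\end{equation*}
so $t_0\in\ddegg(w)$.

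For $\hat t\notin\ddegg(w)$ we distinguish two cases. If $\hat t\notin\ddeg(w)$, then $\hat t\notin\ddegg(w)$ via $\ddegg(w)\subseteq\ddeg(w)$. If $\hat t\in\ddeg(w)$, disjointness forces $\hat t\notin\dpar(w)$; suppose for contradiction that $\hat t\in\ddegg(w)$. Then $u(\cdot,\hat t)\leq C|x-w|^{p/(p-2)}$ on some ball $\Omega\cap B_{r_1}(w)$, and \cref{thm memory full} (applied with initial time $\hat t$, exponent $\delta=p/(p-2)$, and $M=\max\{\sup u,\,C r_1^{p/(p-2)}\}$) yields $\tilde T>0$ with $u(x,s)\leq c_0 M|x-w|^{p/(p-2)}$ for $s\in(\hat t,\hat t+\tilde T/2)$. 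Hence $(\hat t,\hat t+\tilde T/2)\subseteq\ddegg(w)\subseteq\ddeg(w)$, which contradicts $(\dpar(w))^\circ=(\hat t,T)$ being disjoint from $\ddeg(w)$ in \cref{thm dichotomy}. The announced disjoint decomposition of $(0,T)$ then follows from the corresponding decomposition in \cref{thm dichotomy}.

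The main obstacle is the Carleson step: because the Carleson time window has length $\sim C_{t_1}^{2-p}$, the single-step application succeeds only when $C_{t_1}$ does not blow up faster than $(t_1-t_0)^{-1/(p-2)}$ as $t_1\to t_0^+$. Replacing one application by a finite iteration of \cref{thm Carleson} -- each iteration shrinking the time window by a fixed factor while enlarging the constant by a fixed factor -- produces a geometric-series total coverage that closes the gap, since the constant $C_{t_1}$ is finite once $t_1\in(t_0,\hat t)$ is fixed.
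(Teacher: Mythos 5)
Your proposal follows essentially the same route as the paper: take the interval structure and the threshold $\hat t$ from \cref{thm dichotomy}, use \cref{thm Carleson} with a reference time lying in $\ddeg(w)$ to upgrade the non-tangential bound $u(a_\varrho(w),\cdot)\leq C\varrho^{p/(p-2)}$ to a full bound $u\leq C'|x-w|^{p/(p-2)}$ at slightly earlier times (exploiting that $\tau=\frac{C_4}{4}[C_5\Lambda]^{2-p}\varrho^p$ is independent of the scale $\varrho$), and then use \cref{thm memory full} to show that $\hat t\notin\ddegg(w)$ by propagating the degenerate bound past $\hat t$ and contradicting the disjointness with $(\dpar(w))^\circ=(\hat t,T)$; this is exactly the paper's argument, down to the choice of key lemmas. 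One caveat: the ``main obstacle'' you flag (the Carleson time window of length $\sim C_{t_1}^{2-p}$ must exceed the gap $t_1-t_0$) is real, but your proposed remedy does not work as stated: each application of \cref{thm Carleson} multiplies the constant by a fixed factor $K>1$, so the next admissible window shrinks by the fixed factor $K^{2-p}<1$ and the total time covered by the iteration is a \emph{convergent} geometric series, bounded by a fixed multiple of the first window; it cannot bridge an arbitrary gap, so the phrase ``closes the gap'' is unjustified. The paper itself passes over this same point (it simply sets $\delta_1^{p-1}=\delta_2^{p-1}=\delta_3^{p-1}=\epsilon/(4\tau_\varrho)$ for an arbitrary $\epsilon$, which tacitly requires $\epsilon\leq 4\tau_\varrho$ for the hypotheses $\delta_i\leq 1$ of \cref{thm Carleson} to hold), so your write-up matches the published proof modulo this shared glossed-over detail; you should either drop the geometric-series remark or replace it with an honest statement that one needs some reference time $t_1\in(t_0,\hat t)$ whose non-tangential constant satisfies $t_1-t_0\lesssim C_{t_1}^{2-p}$.
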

\begin{proof}
	Consider now a point $w \in \partial \Omega$, then from \Cref{thm dichotomy} it follows that unless $t = \hat t$ we have $t \in \ddeg(w)$ or $t \in \dpar(w)$. Let us prove that if $t \in \ddeg(w)$ $(0,t) \subset \ddegg(w)$.
	
	Let $r < r_0$ and assume that
	\begin{equation} \label{eq large nontan}
		\mathcal{N}^+\bigg [ \frac{u(x,t_0)}{|x-w|^{\frac{p}{p-2}}} \bigg ] (w) < \infty
	\end{equation}
	for some $t_0 \in (0,T)$. As mentioned in \cref{rem:Non-tangential} we know that $a_\varrho(w) \in \Gamma(w)$ for $\varrho < r$. Thus from \Cref{eq large nontan} there exists a constant $\hat C$ such that
	\begin{equation} \nonumber
		u(a_\varrho(w),t_0) \leq \hat C \varrho^{\frac{p}{p-2}} =: \Lambda_\varrho.
	\end{equation}
	With this at hand let us calculate $\tau_\varrho$ from \Cref{thm Carleson} as follows
	\begin{equation} \nonumber
		\tau_\varrho = \frac{C_4}{4} \left[C_5 \Lambda_\varrho \right]^{2-p} \varrho^p = \frac{C_4}{4} \left[C_5 \hat C \right]^{2-p}.
	\end{equation}
	Let now $\epsilon > 0$ be an arbitrary parameter, then take $\delta_1^{p-1} = \delta_2^{p-1} = \delta_3^{p-1} = \epsilon/(4\tau_\varrho)$, in \Cref{thm Carleson} thus if $t_0 - \epsilon \in (0,T)$ and $u$ vanishes at the boundary piece $(\partial \Omega \cap B_r(w)) \times (t_0-\epsilon,t_0)$ then from \Cref{thm Carleson} we have
	\begin{equation} \nonumber
		\sup_{(\Omega \cap B_\varrho(w)) \times (t_0-\epsilon/2,t_0-\epsilon/4)} u \leq c_9(\epsilon) \Lambda_\varrho = c_{10} \varrho^{\frac{p}{p-2}}.
	\end{equation}
	Hence we can conclude that for $t \in (t_0-\epsilon/2,t_0-\epsilon/4)$ we have
	\begin{equation} \nonumber
		\limsup_{\Omega \ni x \to w}\bigg [ \frac{u(x,t)}{|x-w|^{\frac{p}{p-2}}} \bigg ] < \infty.
	\end{equation}
	Since $\epsilon$ and $t$ was arbitrary we obtain $(\ddeg(w))^{\circ} \subset \ddegg(w)$.
	
	Finally we prove that $\ddegg(w)$ is an open interval, to do this let us assume that $\ddegg(w)$ is closed, i.e. we know that there exists a $\hat t$ such that $\ddegg(w) = (0,\hat t]$. This implies that
	\begin{equation} \nonumber
		C_{m} := \sup_{[\hat t/2,\hat t]} \limsup_{\Omega \ni x \to w}\bigg [ \frac{u(x,t)}{|x-w|^{\frac{p}{p-2}}} \bigg ] < \infty
	\end{equation}
	for each $\epsilon$ there is a $\delta$ such that
	\begin{equation} \nonumber
		C_m \leq \sup_{[\hat t/2,\hat t]} \sup_{B_\delta(w) \cap \Omega}\bigg [ \frac{u(x,t)}{|x-w|^{\frac{p}{p-2}}} \bigg ] \leq C_m+\epsilon
	\end{equation}
	i.e. there is a new constant $C_m$ such that
	\begin{equation} \nonumber
		u(x,t) \leq C_m |x-w|^{\frac{p}{p-2}}
	\end{equation}
	in $(B_\delta(w) \cap \Omega) \times [\hat t/2, \hat t]$. Thus it is easy to see that we can apply \cref{thm memory full} to obtain that there is a time $t > \hat t$ such that $t \in \ddegg(w)$ and therefore we arrive at a contradiction. 
\end{proof}

\subsection{Classifying the support of the boundary type Riesz measure}
There is a true equivalence of support of the measure $\mu$ and the regions "non-degenerate" and the "degenerate". In effect if a point becomes non-degenerate it will permeate to the whole domain in finite time and provide support for the boundary type Riesz measure so a non-degenerate point is a true critical point for the behavior of the equation. However for simplicity we will in the following theorem consider when we have degeneracy on a space-time set on the boundary and prove that the measure vanishes.

\begin{theorem} \label{thm:Riesz_no_support}
	Let $\Omega \subset \R^N$ be a bounded domain, and let $u$ be a non-negative solution to \cref{Hu} in $\Omega_T$. Let $E := B_r(w) \times (t_0-\epsilon,t_0)$, $(t_0-\epsilon,t_0) \subset (0,T)$, assume that $u$ vanishes continuously on $E \cap \partial_p \Omega_T$, and 
	\begin{equation} \nonumber
		\limsup_{\Omega \ni x \to w}\bigg [ \frac{u(x,t)}{|x-w|^{\frac{p}{p-2}}} \bigg ] < \infty, \quad \forall (w,t) \in E \cap \partial_p \Omega_T.
	\end{equation}
	Then the measure defined in \Cref{eq:Riesz} vanishes on $E$.
\end{theorem}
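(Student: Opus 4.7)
The plan is to pass to the interior of $\Omega_T$, where $u$ solves \cref{Hu} as a genuine weak solution (no measure), and to probe the lateral boundary piece supporting $\mu$ by a spatial cutoff in the distance function $d(\cdot,\partial\Omega)$. Once the degeneracy hypothesis is converted, via \cref{thm Carleson}, to a pointwise space-time bound $u \le C\,d(\cdot,\partial\Omega)^{p/(p-2)}$ near $E\cap \partial_p\Omega_T$, a Caccioppoli-type argument will show that the thin-shell flux across $\{d = \delta\}$ vanishes as $\delta \to 0$, forcing $\int_E \phi\,d\mu = 0$ for every $\phi \in C_0^\infty(E)$.

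The first step is to upgrade the spatial-limsup assumption at each $(w^*,t^*)\in E\cap\partial_p\Omega_T$ to a local space-time bound. The hypothesis supplies $u(a_\rho(w^*),t^*)\le K_{w^*,t^*}\,\rho^{p/(p-2)}$ for small $\rho$, so \cref{thm Carleson} with $\Lambda := K_{w^*,t^*}\,\rho^{p/(p-2)}$ and $\lambda = 0$ yields a space-time estimate $u\le C(K_{w^*,t^*})\,\rho^{p/(p-2)}$ on a cylinder around $(w^*,t^*)$. The key scaling observation is that for this particular power of $\rho$ the intrinsic time $\tau=(C_4/4)[C_5\Lambda]^{2-p}\rho^p$ is \emph{independent of $\rho$}, so the time scale of the conclusion does not degenerate as $\rho\to 0$. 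For any fixed $\phi\in C_0^\infty(E)$ the set $\supp\phi\cap\partial_p\Omega_T$ is compact, so finitely many such cylinders cover it; a subordinate partition of unity $\phi=\sum\phi_i$ reduces the argument to the case that $\phi$ itself is supported in a single cylinder on which $u(x,t) \le C\,d(x,\partial\Omega)^{p/(p-2)}$ uniformly. Handling the per-point dependence of the constants $K_{w^*,t^*}$ through this covering is the main technical hurdle.

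Now let $\phi \in C_0^\infty$ be non-negative and supported in such a cylinder, and let $\zeta_\delta \in C^\infty(\R^n)$ be a spatial cutoff with $\zeta_\delta\equiv 1$ on $\{d\ge 2\delta\}$, $\zeta_\delta\equiv 0$ on $\{d\le \delta\}$, and $|\nabla\zeta_\delta|\le C/\delta$. Since $u$ is a weak solution in $\Omega_T$, testing with $\phi\zeta_\delta \in C_0^\infty(\Omega_T)$ and rearranging gives
\begin{equation*}
-\int u\,\partial_t\phi\,\zeta_\delta + \int |\nabla u|^{p-2}\nabla u\cdot\nabla\phi\,\zeta_\delta = -\int \phi\,|\nabla u|^{p-2}\nabla u\cdot\nabla\zeta_\delta.
\end{equation*}
Dominated convergence on the left and the defining identity \cref{eq:Riesz} send the left-hand side to $-\int \phi\,d\mu$ as $\delta\to 0$, so the proof reduces to showing that the right-hand side vanishes in the limit.

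Set $A_\delta:=\{\delta\le d(\cdot,\partial\Omega)\le 2\delta\}\cap\supp\phi$, with $|A_\delta|\lesssim \delta$. A Caccioppoli inequality for non-negative $p$-parabolic solutions, applied with a space-time cutoff in a slightly thicker strip $A_\delta^+$ where the first-step bound $u\le C\,\delta^{p/(p-2)}$ is in force, yields
\begin{equation*}
\int_{A_\delta}|\nabla u|^p \;\lesssim\; \frac{1}{\delta^p}\int_{A_\delta^+}u^p \;\lesssim\; \delta^{(3p-2)/(p-2)},
\end{equation*}
the time-boundary terms in Caccioppoli being of the same order. H\"older's inequality then produces
\begin{equation*}
\frac{1}{\delta}\int_{A_\delta}|\nabla u|^{p-1} \;\le\; \frac{|A_\delta|^{1/p}}{\delta}\Bigl(\int_{A_\delta}|\nabla u|^p\Bigr)^{(p-1)/p} \;\lesssim\; \delta^{2(p-1)/(p-2)} \;\longrightarrow\; 0
\end{equation*}
since $p>2$, so the right-hand side of the previous identity vanishes. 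Hence $\int\phi\,d\mu=0$, and summing over the partition of unity concludes $\mu|_E\equiv 0$.
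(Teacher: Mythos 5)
Your second and third steps (testing with $\phi\zeta_\delta$ and killing the flux through the shell $\{\delta\le d\le 2\delta\}$ by Caccioppoli plus H\"older) are fine, and the exponent bookkeeping checks out; in fact your claim $|A_\delta|\lesssim\delta$ is both unjustified for a general bounded domain and unnecessary, since even the trivial bound $|A_\delta|\lesssim 1$ leaves a positive power $\delta^{p/(p-2)}$. The genuine gap is in your first step. The theorem assumes only a bounded domain, so neither the corkscrew points $a_\rho(w^*)$ nor the Carleson estimate \cref{thm Carleson} (both of which require the NTA structure of \cref{NTA}) are available; your entire reduction to the uniform bound $u\le C\,d(\cdot,\partial\Omega)^{p/(p-2)}$ rests on machinery that the hypotheses do not provide. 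Moreover, even if $\Omega$ were NTA, the covering-plus-Carleson scheme does not resolve what you yourself flag as ``the main technical hurdle'': to get the \emph{distance-weighted} bound at a point with $d(x)=\delta$ you must apply Carleson at scale $\delta$ around the nearest boundary point $w'$, and the input $u(a_\delta(w'),t')\le K_{w',t'}\delta^{p/(p-2)}$ again carries the per-point constant, so the argument is circular. This matters because your shell estimate genuinely needs $u\lesssim \delta^{p/(p-2)}$ on $A_\delta$; a per-cylinder bound $u\le C_i\rho_i^{p/(p-2)}$ (constant in $\delta$) makes the flux term blow up like $\delta^{-p}$ rather than vanish.

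Note that the hypothesis is a limsup over all of $\Omega\ni x\to w$, not merely a non-tangential one, so no Carleson estimate is needed to reach the interior: the paper simply restricts to a compactly contained subcylinder $\hat E\Subset E$ and reads off a bound $u(x,t)\le C|x-w|^{p/(p-2)}$ there (the passage from pointwise limsups to a uniform constant on $\hat E$ is the only thing to argue, and it is done without any NTA input). From that point the paper's route differs from yours: instead of a distance cutoff and a flux estimate, it applies the measure upper bound of \cref{MuUpperBound} on boundary cylinders $Q_{\varrho,\sigma}$ with \emph{fixed} time height $\sigma$ and small spatial radius $\varrho$, obtaining $\mu(Q_{\varrho,\sigma})\lesssim \varrho^{n+p/(p-2)}$, and then a covering argument (the trivial count of $\varrho$-balls suffices) forces $\mu=0$ on $\hat E$. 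If you repair your first step by deriving the uniform bound directly from the hypothesis on $\hat E\Subset E$, as the paper does, your shell argument becomes a correct and somewhat more hands-on alternative to the paper's covering argument; as written, however, the proposal does not prove the theorem in the stated generality.
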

\begin{remark} \label{rem:cont}
	The above theorem implies that for a solution that has a section of degenerate points can actually be extended across the boundary as a solution. This is a fairly remarkable result and provides an example of solutions with a free boundary that is stationary for a positive time interval. I.e. consider
	\begin{align*}
		u_1(x,t)&=C(p)(T-t)^{-\frac{1}{p-2}%
		}\max\{x_{n},0\}^{\frac p{p-2}}
	\end{align*}
	which is a solution across $x_n = 0$ according to \cref{thm:Riesz_no_support}. In fact it is an example of an ancient solution with a stationary free boundary. 
	
	Another example would be if we had initial datum satisfying
	\begin{align*}
		u_0(x) \leq d(x,\partial \Omega)^{\delta}, \qquad \delta \geq \frac{p}{p-2},
	\end{align*}
	in an NTA-domain $\Omega$ and consider a solution in $\Omega \cap B_r(w) \times (0,T)$, $w \in \partial \Omega$ such that $u \leq M$ and $u$ vanishing on $B_r(w) \cap \partial \Omega \times (0,T)$, then if $T$ is small enough depending only on $M,p,\delta$, this solution can be extended across the boundary as a solution (see \cref{thm memory full,thm:Riesz_no_support}). This solution is a more advanced example of a solution with a stationary free boundary.
\end{remark}
\begin{theorem} \label{thm:Riesz_support}
	Let $\Omega \subset \R^N$ be a bounded NTA-domain with constants $M,r_0$, and let $u$ be a non-negative solution to \cref{Hu} in $\Omega_T$. Let $E := B_r(w) \times (t_0-\epsilon,t_0)$, $(t_0-\epsilon,t_0) \subset (0,T)$, assume that $u$ vanishes continuously on $E \cap \partial_p \Omega_T$, and 
	\begin{align} \label{eq:Riesz_support_upper_bound}
		\sup_{E \cap \Omega_T} \frac{d(x,\partial \Omega)}{u(x,t)} = \Lambda^{-1} < \infty,
	\end{align}
	then $\mu$ is positive on $E \cap \partial_p \Omega_T$.
\end{theorem}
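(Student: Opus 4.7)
The plan is to argue by contradiction: suppose $\mu \equiv 0$ on $E \cap \partial_p \Omega_T$. Extending $u$ by zero to all of $E = B_r(w) \times (t_0-\epsilon, t_0)$---legitimate because $u$ vanishes continuously on $E \cap \partial_p \Omega_T$ and is locally $W^{1,p}$ in $\Omega_T$---the identity \Cref{eq:Riesz} with $\mu \equiv 0$ reduces to the weak formulation of \cref{Hu} on the full cylinder $E$. Thus, under this assumption, the extended $u$ is a non-negative continuous weak solution of the $p$-parabolic equation in all of $E$, and not merely in the subset $E \cap \Omega_T$.

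The idea is now to use the forward Harnack chain \Cref{lem AKN HChain}, applied to this extended solution, to propagate the strict positivity of $u$ (which holds on $E \cap \Omega_T$ by \Cref{eq:Riesz_support_upper_bound}) to a point in $B_r(w)$ lying in the exterior of $\overline{\Omega}$, where by construction $u \equiv 0$. The essential geometric input is that the NTA structure provides, at any scale $\delta < r_0$, both an interior corkscrew $x^* := a_\delta(w) \in \Omega$ with $|x^* - w| < \delta$ and $d(x^*, \partial\Omega) \geq \delta/M$, and an exterior corkscrew $z^* := a_\delta^{-}(w) \in \R^n \setminus \overline{\Omega}$ with the analogous properties from NTA condition \ref{NTA2}. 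In particular $|x^* - z^*| < 2\delta$, and the hypothesis yields $u(x^*, t) \geq \Lambda\, d(x^*,\partial\Omega) \geq \Lambda \delta/M > 0$ while $u(z^*, t) = 0$ for every $t \in (t_0-\epsilon, t_0)$.

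Fix $t^* := t_0 - \epsilon/2$, and apply \Cref{lem AKN HChain} with $k = 0$, $x_0 = x^*$, and $r_b := 2\delta$, which is legitimate since $z^* \in B_{r_b}(x^*)$ and $B_{4 r_b}(x^*) \subset B_r(w)$ as soon as $\delta < r/9$. A direct computation shows that both the backward time condition $(\bar c_1/u(x^*,t^*))^{p-2}(4r_b)^p < \epsilon/2$ and the forward time condition $\bar c_3(0) u(x^*, t^*)^{2-p} r_b^p < \epsilon/2$ are of the form $C(\Lambda, M, p)\, \delta^2 < \epsilon/2$, and are therefore met by choosing $\delta$ small enough (depending on $\Lambda, M, p, n, \epsilon, r, r_0$). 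The conclusion of the lemma yields
\[
    u(z^*, t^* + \tau) \;\geq\; \bar c_2^{-k} u(x^*, t^*) \;\geq\; u(x^*, t^*) \;>\; 0,
\]
contradicting $u(z^*, \cdot) \equiv 0$. Hence $\mu$ cannot vanish on $E \cap \partial_p \Omega_T$.

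The only real technical subtlety is the justification that the zero-extension of $u$ across $\partial\Omega \cap B_r(w)$ lies in $L^p_{\rm loc}((t_0-\epsilon, t_0); W^{1,p}_{\rm loc}(B_r(w)))$, so that \Cref{eq:Riesz} can legitimately be read as a weak-solution identity on all of $E$ once $\mu$ is assumed to vanish; this is a standard consequence of the continuous boundary vanishing of $u$ together with $u \in W^{1,p}_{\rm loc}(\Omega_T)$. Beyond that, the argument is purely geometric: the NTA property guarantees a matched interior-exterior corkscrew pair at every boundary scale, and since the extended $u$ would be a genuine solution, the intrinsic Harnack reach of the ball around $x^*$ automatically envelops the forbidden exterior point $z^*$.
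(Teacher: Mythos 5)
Your argument is correct in substance, but it takes a genuinely different route from the paper. You read $\mu$ as the obstruction to extending $u$ by zero across $\partial \Omega \cap B_r(w)$: if $\mu \equiv 0$ on $E$, then \cref{eq:Riesz} says the zero-extension is a genuine non-negative continuous weak solution in all of $E$, and the intrinsic forward Harnack inequality (\cref{lem AKN HChain}, applied at the scale $\delta$ of a matched interior/exterior corkscrew pair, with waiting time of order $(\Lambda\delta/M)^{2-p}\delta^p \sim \delta^2$ thanks to \cref{eq:Riesz_support_upper_bound}) forces positivity at the exterior corkscrew point, a contradiction. The paper instead argues directly and quantitatively: it rescales $u$ intrinsically around an interior corkscrew point, compares from below with the solution of the model Dirichlet problem in \cref{MuLowerSimple}, and uses monotonicity of Riesz measures (\cref{meas order}) to obtain the explicit bound $\mu\big(B_\varrho(y)\times[t_1,t_2]\big) \geq C\Lambda \varrho^{n+1}$ for every boundary point $y \in \partial\Omega \cap B_r(w)$ and every small $\varrho$; this caloric-measure-type scaling is precisely what the remark following the proof exploits, and your soft argument does not produce it. Two points to tidy up in your version: (i) as written, the contradiction only excludes $\mu \equiv 0$ on all of $E$, i.e.\ it yields $\mu(E) > 0$; to get positivity on every relatively open portion of $E \cap S_T$ (which is what the theorem asserts and what the paper's quantitative proof delivers) you should observe that the hypothesis \cref{eq:Riesz_support_upper_bound} is inherited by every subcylinder $B_\varrho(y) \times (t_1,t_2)$ centered at a boundary point, so the identical contradiction applies there; (ii) \cref{lem AKN HChain} with $k = 0$ formally produces the constant $\bar c_2^{\,0} = 1$, which is stronger than the intrinsic Harnack inequality it encodes, so it is safer to take $k = 1$ (or invoke the intrinsic Harnack inequality directly); since you only need strict positivity at $z^*$, this changes nothing in the argument.
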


%

\subsection{Consequences for Harnack chains} \label{cons:harnack}
We begin by assuming that we have a solution to the $p$-parabolic equation in $\Omega_T$, where $\Omega \subset \R^n$ is an $(M,r_0)$-NTA-domain. We assume that for a given instant $t_0 \in (0,T)$ and a given point on the boundary $w \in \partial \Omega$ the following estimate holds from below
\begin{align} \label{cons:lower}
	r \leq C u(a_r(w),t_0) \quad \text{for all $r \in (0,r_0)$.}
\end{align}
Let $y \in \Omega$ be any point and assume that $d(y,\partial \Omega) \approx d(a_{r}(w),y) \leq L r$ holds for some $r \in (0,r_0)$ then the following holds:
\begin{align*}
	u(a_r(w),t_0) \leq c u(y,t_0+C r^2), \quad \text{for some $C > 0$ depending on $L$.}
\end{align*}
Since $\Omega$ is an NTA-domain this follows from \cite[Theorem 3.5]{AKN} together with the lower bound \cref{cons:lower}. The immediate consequence of this is that most Harnack-based estimates from below reduces to a non-intrinsic version and scales exactly as in the linear case (heat equation). This is yet another reason for denoting the estimate \cref{cons:lower} a non-degeneracy estimate.

\section{Immediate linearization: Proof of Theorem \ref{thm linearization}}
We begin this section with a barrier type argument together with a rescaling and iteration method to obtain a ``sharp'' lower bound of a useful comparison function. A bit more complicated proof, which is $p$-stable can be found in \cite{AKN}. In the proof below we are employing the Barenblatt solution, which is given by,
\newcommand{\U}{\mathcal U}
\begin{align*}
	\U(x,t) = t^{-k} \left ( C_0 - q \left ( \frac{|x|}{t^{k/n}}\right )^{\frac{p}{p-1}} \right )^{\frac{p-1}{p-2}}_+
\end{align*}
where
\begin{align*}
	k = \left ( p-2 + \frac{p}{n} \right )^{-1}, \qquad q = \frac{p-2}{p} \left ( \frac{k}{n}\right )^{\frac{1}{p-1}}
\end{align*}
and $C_0$ is a constant depending only on $p,n$. In the following proof we will be using some properties of the Barenblatt function, the first is that the level sets are strictly increasing balls with time, the second is that the maximum of the function is for each time slice at the origin, the third is that the radial derivative is non-zero as long as the function is positive and we are not at the origin.

\begin{lemma}\label{lower bound1}
	Consider a solution to
	\begin{align} \label{initial bastard}
		\left \{
		\begin{array}{rcll}
			u_t - \Delta_p u &=& 0, & \text{ in $B_2 \times (0,\infty)$} \\
			u &=& 0, &\text{ on $\partial B_2 \times (0,\infty)$}\\
			u &\geq& \chi_{B_1}, &\text{ for $t=0$}\\
		\end{array}
		\right .
	\end{align}
	then there exists constants $c_1,c_2,c_3,c_4 > 0$ all depending only on $p$ and $n$ such that
	\begin{equation} \label{inflowerbound}
		\inf_{x \in B_1} u(x,t) \geq c_1\bigg (c_2 t+1\bigg )^{\frac{1}{2-p}}, \quad t \geq 0,
	\end{equation}
	and 
	\begin{equation} \label{distancelowerbound_simple}
		u(x,t) \geq c_3 (2-|x|) \bigg (c_2 t+1\bigg )^{\frac{1}{2-p}}, \quad t \geq c_4.
	\end{equation}
\end{lemma}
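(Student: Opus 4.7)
My plan is to use the comparison principle with subsolutions of the $p$-parabolic equation constructed from the Barenblatt profile $\U$ (and, if needed, a first Dirichlet eigenfunction of $-\Delta_p$).

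For \cref{inflowerbound}, I would first rescale $\U$ so that at an initial time $T_0 > 0$ depending only on $p, n$, the profile $\widetilde\U(\cdot, T_0)$ is supported in $\overline{B_1}$ with $L^\infty$-norm at most $1$. Hence $\widetilde\U(\cdot, T_0) \leq \chi_{B_1} \leq u(\cdot, 0)$. The Barenblatt's finite speed of propagation gives $\widetilde\U(\cdot, T_0+t)$ support $\overline{B_{R(T_0+t)}}$ with $R$ strictly increasing in $t$; let $T^*$ be the first time at which $R(T_0+t)=2$. On $B_2 \times [0, T^*]$, $\widetilde\U$ is a $p$-parabolic solution vanishing on $\partial B_2$, so the comparison principle gives $u(x, t) \geq \widetilde\U(x, T_0+t)$ there. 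The peak of the Barenblatt on $\overline{B_1}$ is comparable to $(T_0+t)^{-k}$ with $k = 1/(p-2+p/n) < 1/(p-2)$; since on the bounded range $[0, T^*]$ this is bounded below by a positive constant, and the initial condition $u(\cdot, 0) \geq 1$ on $B_1$ handles the degenerate moment near $t = 0$ where the Barenblatt vanishes near $\partial B_1$, \cref{inflowerbound} follows on $[0, T^*]$. For $t > T^*$ I would iterate the Barenblatt construction: at each time $t_k$ when the previous Barenblatt's support has grown to $\overline{B_{3/2}}$, fit a new Barenblatt under $u(\cdot, t_k)$ with support $\overline{B_{3/2}}$; the self-similar scaling of $\U$ gives a geometric recurrence $t_{k+1}/t_k \asymp$ const and $h_{k+1}/h_k \asymp$ const, yielding $\inf_{B_1} u(\cdot, t) \geq c t^{-k}$ for $t \geq T^*$, and since $t^{-k} \geq c' (c_2 t+1)^{1/(2-p)}$ for $t \geq 1$, this gives \cref{inflowerbound} for all $t \geq 0$.

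For \cref{distancelowerbound_simple} the linear factor $(2-|x|)$ requires a finer subsolution, because the Barenblatt vanishes like $(2-|x|)^{(p-1)/(p-2)}$ at the edge of its support, which is strictly superlinear for $p > 2$. A clean approach is to introduce the separable subsolution
\[
 v(x, t) := \bigl(1 + c_2(t - c_4)\bigr)^{1/(2-p)} \Phi(x), \qquad t \geq c_4,
\]
where $\Phi$ is the first positive Dirichlet eigenfunction of $-\Delta_p$ on $B_2$ (so $\Phi \in C^{1,\alpha}(\overline{B_2})$ with $\Phi(x) \asymp (2-|x|)$ near $\partial B_2$), $\lambda_1 > 0$ is the eigenvalue, and $c_2 := \lambda_1(p-2)$. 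A direct computation with the separation ansatz shows that $v$ is a $p$-parabolic solution in $B_2$ vanishing on $\partial B_2$. Choosing the waiting time $c_4 > 0$ large enough in terms of $p, n$ so that $u(\cdot, c_4) \geq \Phi$ on $B_2$ after suitable normalization of $\Phi$, the comparison principle gives $u \geq v$ on $B_2 \times [c_4, \infty)$, which yields \cref{distancelowerbound_simple}.

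The main obstacle is verifying $u(\cdot, c_4) \geq \Phi$ on $B_2$, which reduces to producing a linear-in-distance lower bound $u(x, c_4) \geq c(2 - |x|)$ near $\partial B_2$. Since no single origin-centered Barenblatt produces a linear profile at its support boundary, one either translates several Barenblatts centered near different points of $\partial B_2$ (each fitting under $u$ at time $T^*$ by the interior positivity already established) and takes the pointwise maximum of the resulting subsolutions, or invokes $C^{1,\alpha}$-boundary regularity for the $p$-parabolic equation on smooth domains (DiBenedetto) to extract a linear lower bound from the interior positivity. With this in hand the separable subsolution propagates the estimate for all later times, and all constants remain traceable in terms of $p$ and $n$.
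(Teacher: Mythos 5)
Your treatment of \cref{inflowerbound} is essentially the paper's: compare with a Barenblatt rescaled to sit below $\chi_{B_1}$, then iterate via the intrinsic scaling $u\mapsto u(x,m^{2-p}t+t_0)/m$. One correction, though: the iteration cannot yield $\inf_{B_1}u(\cdot,t)\ge c\,t^{-k}$ with $k=(p-2+p/n)^{-1}$. That rate belongs to the mass-conserving problem in all of $\R^n$; in $B_2$ with zero lateral data every solution lies below a separable supersolution of size $t^{-1/(p-2)}$, and since $k<1/(p-2)$ a lower bound $c\,t^{-k}$ would contradict this for large $t$. What your geometric recurrence (heights $h_{j+1}=\sigma h_j$, waiting times proportional to $h_j^{2-p}$) actually produces is precisely the rate $t^{-1/(p-2)}$, which is \cref{inflowerbound}; so this is a harmless miscomputation in its conclusion, but the intermediate claim as written is false.

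The genuine gap concerns \cref{distancelowerbound_simple}. First, a repair: $v=f(t)\Phi(x)$ with $\Phi$ the first Dirichlet eigenfunction of $-\Delta_p$ is not a solution, since separation of variables requires $\Delta_p\Phi=-\mu\Phi$ whereas the eigenfunction satisfies $-\Delta_p\Phi=\lambda_1\Phi^{p-1}$; with $f=(1+c_2(t-c_4))^{1/(2-p)}$ one computes $v_t-\Delta_p v=f^{p-1}\bigl(\lambda_1\Phi^{p-1}-\tfrac{c_2}{p-2}\Phi\bigr)$, so $v$ is only a subsolution after normalizing $\|\Phi\|_\infty$ or enlarging $c_2$ --- fixable, and a subsolution suffices for comparison. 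The real problem is the step you yourself flag as the main obstacle: obtaining $u(\cdot,c_4)\ge c\,(2-|x|)$. Neither of your proposed fixes works. Translated Barenblatts vanish like $d^{(p-1)/(p-2)}$ at the edge of their support, which is strictly superlinear for $p>2$, and they may not be allowed to spill over $\partial B_2$ (there they would be positive where $u=0$, destroying the comparison), so a maximum of such subsolutions never produces a linear profile at $\partial B_2$. And $C^{1,\alpha}$ boundary regularity gives only the upper bound $u\le C\,d(x,\partial B_2)$; a linear lower bound is a Hopf-type statement, which is exactly what is at issue in this paper (the solution $C(p)(T-t)^{-1/(p-2)}x_n^{p/(p-2)}$ shows it can fail without a waiting time), so it cannot simply be invoked. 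The paper closes this gap by truncating the Barenblatt at a positive level: the rescaled $(\mathcal U-\kappa_1/2)_+$ is an admissible comparison function whose positivity set expands, and at the time $\tilde t_2$ when that set equals $B_2$, the sphere $\partial B_2$ is a positive level set of $\mathcal U$, where the radial derivative does not vanish, giving $u(\cdot,\tilde t_2)\ge\tilde c\,(2-|x|)$; the bound is then propagated forward by comparison in the annulus $B_2\setminus B_1$ with a time-independent $p$-harmonic barrier re-fitted on successive time intervals using \cref{inflowerbound} on $\partial B_1$. Your separable subsolution could replace that last propagation step once the linear bound at a single time is secured, but without the truncation idea (or an equivalent Hopf-type barrier with waiting time) the proof of \cref{distancelowerbound_simple} is incomplete.
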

\begin{proof}
	Let us consider the Barenblatt function $\U$, then consider $t_1$ such that 
	\begin{equation} \nonumber
		\supp \U(\cdot,t_1) = B_1,
	\end{equation}
	denote $\U(0,t_1) = \kappa_1$, and consider the rescaled Barenblatt
	\begin{equation} \nonumber
		\hat \U(x,t) = \frac{\U(x,\kappa_1^{2-p} t+t_1)}{\kappa_1}.
	\end{equation}
	Now let $t_2$ be such that $\supp \hat \U(\cdot,t_2) = B_2$, then denote
	\begin{equation} \nonumber
		\sigma = \inf_{x \in B_1} \hat \U(x,t_2) > 0.
	\end{equation}
	Note that $\sigma = \sigma(p,n) \in (0,1)$ is a constant.
	By the comparison principle we obtain that in $B_2 \times (0,t_2)$ we have that $u \geq \hat \U$ and thus
	\begin{equation} \label{lower1}
		\inf_{x \in B_1} u(x,t_2) \geq \sigma.
	\end{equation}

	We will now use an iterative argument. Assume that we have a function $u$ in $B_2 \times (t_0,\infty)$ such that
	\begin{equation} \label{iter1}
		\inf_{x \in B_1} u(x,t_0) \geq m
	\end{equation}
	then the function 
	\begin{equation} \label{iterrescale}
		v(x,t) = \frac{u(x,m^{2-p} t+t_0)}{m}
	\end{equation}
	satisfies \Cref{initial bastard} and thus from \Cref{lower1} 
	\begin{equation} \nonumber
		\inf_{x \in B_1} v(x,t_2) \geq \sigma.
	\end{equation}
	Rescaling back we obtain
	\begin{equation} \label{iter2}
		\inf_{x \in B_1} u(x,t_0+t_2 m^{2-p}) \geq m \sigma.
	\end{equation}
	
	If we start from $u$ in \Cref{initial bastard} with $t=0$ and iterate \Cref{iter1,iter2,iterrescale}, each starting from the following times 
	\begin{equation} \label{itersum1}
		\tau_k = \sum_{i=1}^k t_2 m_{i-1}^{2-p} = t_2 \sum_{i=0}^{k-1} (\sigma^{2-p})^i = t_2 \frac{(\sigma^{2-p})^{k-2}-1}{\sigma^{2-p}-1},
	\end{equation}
	where $m_i := \sigma^i$, we get
	\begin{equation} \label{itersum2}
		\inf_{x \in B_1} u(x,t_0+\tau_k) \geq m_{k}.
	\end{equation}
	Let us be given a $t \geq 0$, and let $k$ be the integer such that
	\begin{equation} \label{tauk}
		\tau_{k-1} \leq t \leq \tau_k
	\end{equation}
	then by \Cref{itersum1,tauk}
	\begin{equation} \label{tktranslation1}
		\sigma^2 \bigg (t \frac{(\sigma^{2-p}-1)}{t_2}+1\bigg )^{\frac{1}{2-p}} \in (\sigma^{k},\sigma^{k-1})
	\end{equation}
	thus collecting \Cref{tauk,itersum1,itersum2,tktranslation1} we get \Cref{inflowerbound}.
	
	Let us now prove \Cref{distancelowerbound_simple}. This we do as follows.
	Consider again the Barenblatt function, with $t_1$ as before, but this time, let us consider the following rescaled Barenblatt
	\begin{equation} \nonumber
		\tilde \U(x,t) = \frac{\U(x,(\kappa_1/2)^{2-p}t+t_1)-\kappa_1/2}{\kappa_1/2}.
	\end{equation}
	Then find $\tilde t_2$ where $\set{\tilde \U > 0} = B_2$, and note that $\tilde t_2 > t_2$ and thus there is a constant $\tilde c > 0$ such that
	\begin{equation} \label{tildet2lower}
		\tilde \U(x,\tilde t_2) \geq \tilde c (2-|x|).
	\end{equation}
	Now, by the construction of $\tilde \U$ and the parabolic comparison principle we get that in $\overline{B_2} \times [0,\tilde t_2]$, $\tilde U \leq u$ and thus \Cref{tildet2lower} holds also for $u$ at $\tilde t_2$.
	
	Going back to \Cref{itersum1}, let $k_0 > 0$ be the number such that
	\begin{equation} \nonumber
		\tau_{k_0} \leq \tilde t_2 \leq \tau_{k_0+1}.
	\end{equation}
	Consider $v(x):B_2 \setminus B_1 \to \R_+$ as the solution to $\Delta_p v = 0$ and $v = c_0$ at $\partial B_1$ and $0$ at $\partial B_2$, where $c_0$ is to be fixed. First take $c_0$ to be the largest number so that
	\begin{equation} \nonumber
		v \leq \tilde c (2-|x|) \quad \text{and}\quad c_0 \leq \sigma^{k_0+1}.
	\end{equation}
	This implies that $v(x) \leq u(x,t)$ on $\partial_p [ (B_2 \setminus B_1) \times (\tilde t_2, \tau_{k_0+1})]$ and thus by the parabolic comparison principle we have $v \leq u$ in $(B_2 \setminus B_1) \times (\tilde t_2,\tau_{k_0+1}]$. Moreover there is a new constant $\hat c > 0$ such that
	\begin{equation} \nonumber
		v \geq \hat c (2-|x|).
	\end{equation}
	Now we can apply the same argument iteratively for $(\tau_{k_0+j},\tau_{k_0+j+1})$, $j=1,\ldots$ and obtain
	\begin{equation} \nonumber
		u(x,t) \geq c_2 (2-|x|) \sigma^j
	\end{equation}
	for $t \in [\tau_{k_0+j},\tau_{k_0+j+1}]$, and thus we get as in \Cref{tauk,tktranslation1} that \Cref{distancelowerbound_simple} holds, and thus we have proved \Cref{lower bound1}.
\end{proof}


\begin{proof}[Proof of \cref{thm linearization}]
	Due to translation invariance we can assume that $w = 0$. Let $\epsilon > 0$ be a given number such that $t-\epsilon > 0$ and $t+\epsilon < T$.
	The condition \Cref{eq linearization req} implies that there is a sequence of points $\set{x_j}_{j=1}^{\infty}$, $x_j \in \Gamma(w)$ and $x_j \to w$ such that for a strictly decreasing function $\eta$ we get
	\begin{equation} \label{eq eta definition}
		u(x_j,t_0) \geq \eta(|x_j|) |x_j|^{\frac{p}{p-2}},
	\end{equation}
	where $\eta(0^+) = \infty$.
		%

	Start by considering $g(r) = r^{\frac{p}{p-2}} \eta(r)$, and define the ``time-lag'' function $\theta$ as
	\begin{equation} \nonumber
		\theta(r):= g(r)^{2-p} r^p = r^{(p-2)\left (\frac{p}{p-2}-\frac{p}{p-2} \right )}\eta(r)^{2-p} = \eta(r)^{2-p} 
	\end{equation}
	which implies that $\theta$ is a strictly increasing function such that $\theta(0^+) = 0$.
	We construct the sets for $r \leq r_0$
	\begin{align*}
		N^r_1 &:= \bigset{x \in \Omega: d(x,\partial \Omega \cap B(0,r)) = d(x,\partial \Omega) = r}, \\
		N^r_2 &:= \bigset{x \in \Omega: d(x,N^r_1) \leq r/2}.
	\end{align*}
	Note that since $\Omega$ is an NTA-domain we see that $N^r_1$ can be covered by $k(M,r_0)$ balls of size $r/4$. 
	Now, consider the unique $r_\epsilon$ such that
	\begin{equation*}
		\theta(r_\epsilon) = \frac{\epsilon}{4 \max \set{\bar c_1, c_4(\bar c_2^{k})^{p-2}, \bar c_3(k)}},
	\end{equation*}
	where $c_4 > 1$ is from \Cref{lower bound1} and $\bar c_i$, $i \in \set{1,2,3}$ is from \Cref{lem AKN HChain}. Let $J \in \mathbb{N}$ be the smallest integer such that $|x_J| \leq \min\set{r_\epsilon,r_0}$. Denote $r = d(x_J,\partial \Omega)$ and note that since $x_J \in \Gamma(w)$, $r \approx |x_J|$. 
	From \Cref{eq eta definition} and the choice of $r_\epsilon$ we can apply the forward Harnack chain (\Cref{lem AKN HChain}) to obtain that in $N^r_2$ we have for a time 
	\begin{equation} \nonumber
		t_1 = t_0 + \bar c_3(k) u(x_J,t_0)^{2-p} \left (\frac{r}{4} \right )^p,
	\end{equation}
	where $t_0 < t_1 < t_0+\epsilon/4$ and
	\begin{equation} \label{eq u larger than g r}
		u(x,t_1) \geq \bar c_2^{-k} u(x_J,t_0) \geq \bar c_2^{-k} g(r).
	\end{equation}
	Now consider any point $y$ in $N^r_1$, and let $v$ denote a function satisfying \cref{initial bastard} but with $v(\cdot,0) = \chi_{B_1}$. Let us translate and scale $v$ as follows 
	\begin{equation} \nonumber
		\bar v(x,t) = (\bar c_2^{-k} g(r)) v\left (\frac{x-y}{r/2}, (t-t_1) \frac{(\bar c_2^{-k})^{p-2} 2^p}{\theta(r)} \right ).
	\end{equation}
	The function $\bar v$ now satisfies
	\begin{align*}
		\left \{
		\begin{array}{rcll}
			\bar v_t - \Delta_p \bar v &=& 0, & \text{ in $B_r \times (t_1,\infty)$} \\
			\bar v &=& 0, &\text{ on $\partial B_r \times (t_1,\infty)$}\\
			\bar v &=& \bar c_2^{-k} g(r)\chi_{B_{r/2}}, &\text{ for $t=t_1$}.
		\end{array}
		\right .
	\end{align*}
	We can now use the parabolic comparison principle together with \Cref{eq u larger than g r} to conclude $u \geq \bar v$. Applying \Cref{lower bound1} we get for
	\begin{equation} \nonumber
		t_2 := t_1 + c_4 \frac{\theta(r)}{(\bar c_2^{-k})^{p-2} 2^p} < t < T
	\end{equation}
	that
	\begin{equation} \nonumber 
		u(x,t) \geq c_3 \frac{2(r-|x-y|)}{r} \bigg (c_2 (t-t_1) \frac{(\bar c_2^{-k})^{p-2} 2^p}{\theta(r)}+1\bigg )^{\frac{1}{2-p}}.
	\end{equation}
	Furthermore our choice of $r_\epsilon$ gives $t_0 < t_2 < t_0 + \epsilon$.
	In particular since $y$ was an arbitrary point in $N^r_1$ and $\epsilon > 0$ was arbitrary, we can conclude that \Cref{eq linearized} holds for $t > t_0+\epsilon$.
\end{proof}

\section{Memory-effect for degenerate initial data: Proof of Theorem \ref{thm memory full}} 
\label{sec:memory}
This next result is a theorem about a certain memory effect of the $p$-parabolic equation, essentially it states that if the boundary behavior at a fixed point has a certain decay-rate property that is higher than $r^{\frac{p}{p-2}}$ then the equation will remember this decay for some time forward dictated by the size of the solution. Another way to look at this is that a solution with this decay-rate does not regularize immediately.

\begin{theorem}\label{thm memory}
	Consider a bounded domain $\Omega \subset \R^n$ and assume that $w \in \partial \Omega$. Consider the domain $E_T := E \times (0,T) := \Omega \cap B(w,2) \times (0,T)$ and consider a solution to the following Cauchy-Dirichlet problem
	\begin{equation} \label{upperproblem}
		\left \{
		\begin{array}{rcll}
			u_t - \Delta_p u &=& 0, &\text{ in $E_T$ }\\
			u &=& 0, &\text{ on $\partial \Omega \cap \partial_p E_T$}\\
			u(x,t) &\leq& M, &\text{ on $\partial_p E_T$ } \\
			u(x,0) &\leq& M |x-w|^\delta,&\text{ for $x \in E$}.
		\end{array}
		\right .
	\end{equation}
	Then there exists a time
	\begin{equation} \label{hatT}
		\hat T := \left [\frac{C(p,\delta)}{M} \right ]^{p-2}
	\end{equation}
	and a constant $c_0(p) > 0$	such that for $t \in (0,\min\set{\hat T/2,T})$ the following upper bound holds
	\begin{equation} \nonumber
		u(x,t) \leq c_0 M |x-w|^{\delta}, \quad x \in E.
	\end{equation}
\end{theorem}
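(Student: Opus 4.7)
The plan is to construct an explicit radial supersolution of separated-variable form $\bar u(x,t)=g(t)|x-w|^\delta$ that dominates $u$ on the entire parabolic boundary of $E_T$, and then to invoke the comparison principle for weak sub- and supersolutions of \Cref{Hu}. The radial profile is dictated by the shape of the initial data, and $g$ is chosen as the solution of a Bernoulli ODE with $g(0)=M$ that captures the natural blow-up rate of separable solutions.

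First I compute $\Delta_p|x-w|^\delta$. A short direct calculation gives
\begin{align*}
\Delta_p|x-w|^\delta=\delta^{p-1}\bigl(\delta(p-1)-p+N\bigr)|x-w|^{\delta(p-1)-p},
\end{align*}
so the supersolution inequality $\bar u_t\geq \Delta_p \bar u$ reduces to
\begin{align*}
g'(t)\,\geq\,\kappa\, g(t)^{p-1}\,|x-w|^{\delta(p-2)-p},
\end{align*}
with $\kappa=\delta^{p-1}\bigl(\delta(p-1)-p+N\bigr)>0$. Here is where the hypothesis $\delta\geq p/(p-2)$ enters decisively: it makes the exponent $\delta(p-2)-p$ nonnegative, so on $E\subset B(w,2)$ one may bound $|x-w|^{\delta(p-2)-p}\leq 2^{\delta(p-2)-p}$, and the condition reduces to the autonomous ODE $g'=C_1 g^{p-1}$ for an explicit constant $C_1=C_1(p,\delta,N)$.

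Solving this ODE with $g(0)=M$ yields the Bernoulli blow-up profile
\begin{align*}
g(t)=\bigl(M^{-(p-2)}-(p-2)C_1 t\bigr)^{-1/(p-2)},
\end{align*}
which blows up at $\hat T=[(p-2)C_1]^{-1}M^{-(p-2)}=[C(p,\delta)/M]^{p-2}$ and satisfies $g(t)\leq 2^{1/(p-2)}M$ throughout $[0,\hat T/2]$. This identifies the $\hat T$ of \Cref{hatT} and fixes $c_0:=2^{1/(p-2)}$, depending only on $p$.

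It remains to verify $\bar u\geq u$ on each piece of $\partial_p E_{\tilde T}$: on the initial slice, $\bar u(\cdot,0)=M|x-w|^\delta\geq u_0$ by hypothesis; on $\partial\Omega\cap\overline{B(w,2)}$, $u=0\leq\bar u$; and on $\partial B(w,2)\cap\overline{\Omega}$, the identity $|x-w|=2$ together with $g(t)\geq M$ gives $\bar u\geq M\cdot 2^\delta\geq M\geq u$. Comparison then yields $u\leq c_0 M|x-w|^\delta$ on $E_{\tilde T}$. The only mildly delicate point, which I expect to be the main (but still routine) technical obstacle, is that $\bar u$ is smooth away from $w$ but only $C^{0,\delta}$ at $w$; since both $u$ and $\bar u$ vanish continuously at $w$, applying comparison on $E\setminus B_\eta(w)$ against the strict supersolution $\bar u+\varepsilon$ and then letting $\eta,\varepsilon\to 0^+$ handles this without additional work.
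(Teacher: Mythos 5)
Your proposal is correct and is essentially the paper's own argument: both build the separable supersolution $g(t)|x-w|^{\delta}$ with $g(t)=M(1-t/\hat T)^{-1/(p-2)}$ (the paper writes it as $c_pc_o(1-t)^{-1/(p-2)}|x|^{\delta}$ and then rescales by $M/(c_oc_p)$, which is the same function as your Bernoulli ODE solution), using $\delta\geq p/(p-2)$ exactly as you do to absorb the factor $|x-w|^{\delta(p-2)-p}\leq 2^{\delta(p-2)-p}$, and then conclude by parabolic comparison with $c_0=2^{1/(p-2)}$. Your final regularization at $w$ is harmless but unnecessary, since $w\in\partial\Omega$ means $w\notin E$, so $\bar u$ is a classical supersolution throughout the open set $E_T$ and comparison applies directly.
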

\begin{proof}
	In order for us to be able to work in higher dimensions than $n=1$ we need to construct a radial version of \Cref{examp1}. For this let us consider a solution of the type $v(x,t):=g(|x|)f(t)$, plugging this into equation \cref{Hu} gives us
	\begin{equation} \nonumber 
		f_t g - f^{p-1} \Delta_p g = 0.
	\end{equation}
	Let us first solve
	\begin{equation} \nonumber 
		f_t(t) = f^{p-1}(t),
	\end{equation}
	which has as a solution $c_p (T-t)^{-\frac{1}{p-2}}$ for any value of $T$,  let us use $T = 1$.
	Next let us solve
	\begin{equation} \nonumber 
		\Delta_p g \leq g
	\end{equation}
	which in radial form $|x|=r$ looks like
	\begin{equation} \label{radialsplit}
		|g'|^{p-2} \bigg [(p-1) g'' +\frac{n-1}{r}g' \bigg ] \leq g.
	\end{equation}
	Now let $\delta \geq \frac{p}{p-2}$ then for $g = c_0 r^\delta$
	\begin{equation} \nonumber
		|g'|^{p-2}\bigg [(p-1) g'' +\frac{n-1}{r}g' \bigg ] = c_0^{p-1} \delta^{p-1} \bigg [(p-1) (\delta-1)  +(n-1) \bigg ]r^{\delta(p-1)-p}
	\end{equation}
	So we only need to choose $c_0$ to be%
	\begin{equation} \label{cochoice}
		c_0(p,\delta,n) := \bigg[ \delta^{p-1} \big [(p-1) (\delta-1)  +(n-1) \big ]  \bigg ]^{\frac{1}{2-p}} 2^{\frac{p}{p-2}-\delta},
	\end{equation}
	in order for $g$ to satisfy \Cref{radialsplit} for $r \leq 2$, since $\delta(p-1)-p \geq \delta$. In fact for our choice of $c_0$ we see that \Cref{radialsplit} is solved with an equality if $\delta = \frac{p}{p-2}$, just as in the one dimensional case \Cref{examp1}. Specifically
	\begin{equation} \nonumber 
		v(x,t) = c_p c_0 (1-t)^{-\frac{1}{p-2}} |x|^{\delta}
	\end{equation}
	is a supersolution to \Cref{ppar1} in $(B_2 \setminus \set{0}) \times (-\infty,1)$.
	
	Let now $u$ be as in \cref{upperproblem} and assume for simplicity that $w = 0$. Let $v$ be the supersolution established above and consider the rescaled function $\bar v$ as
	\begin{align*}
		\bar v(x,t) = \frac{M}{c_0 c_p} v\left ( x, \left ( \frac{c_0 c_p}{M} \right )^{2-p} t \right )
	\end{align*}
	then if we define $C(p,\delta) = c_0 c_p$ and use $\hat T$ as in \cref{hatT}, we get that $\bar v$ satisfies
	\begin{equation} \nonumber 
		\left \{
		\begin{array}{rcll}
			\bar v_t - \Delta_p \bar v &=& 0, &\text{ in $E_{\hat T}$ }\\
			\bar v &\geq& 0, &\text{ on $\partial \Omega \cap \partial_p E_{\hat T}$}\\
			\bar v(x,t) &\geq& M, &\text{ on $\partial_p E_{\hat T}$ } \\
			\bar v(x,0) &=& M |x|^\delta,&\text{ for $x \in E$}.
		\end{array}
		\right .
	\end{equation}
	Thus by the parabolic comparison principle the conclusion of the theorem follows.
\end{proof}

The proof of \cref{thm memory full} now follows from \cref{thm memory} by a simple scaling argument.

\section{The boundary type Riesz measure}
We begin this section with a simplified version of the upper bound of the measure (see \cref{eq:Riesz}) that we developed in \cite[Theorem 5.2]{AKN}, we have included the proof for ease of the reader.
\begin{lemma}
	\label{MuUpperBound} Let $\Omega \subset \R^n$ be a bounded domain. Let $0<r$ and let $u$ be a non-negative solution to \cref{Hu} in $\Omega_T$. Fix a point $x_0 \in \partial \Omega$, $\delta  \in (0,1)$ and a $\Lambda > 0$ such that
	\begin{equation*}
		Q = B_r(x_0) \times (t_0 - \delta \Lambda^{2-p} r^p , t_0),
	\end{equation*}
	\begin{equation} \nonumber
		\Lambda \geq \sup_{2Q \cap \Omega_T} u,
	\end{equation}
	and $t_0 - 2\delta \Lambda^{2-p} r^p > 0$. Now if $u$ vanishes on $\partial_p \Omega_T \cap 2Q$ then the following upper bound holds ($\mu$ is as in \cref{eq:Riesz})
	\begin{align*}
		\frac{\mu(Q)}{r^n} \leq C \Lambda.
	\end{align*}
\end{lemma}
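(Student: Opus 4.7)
The plan is to bound $\mu(Q)$ by plugging a smooth non-negative cutoff $\phi$ into the defining relation \cref{eq:Riesz}. Choosing $\phi \equiv 1$ on $Q$ and compactly supported in $2Q$, positivity of $\mu$ gives $\mu(Q) \leq \int\phi\,d\mu$, and \cref{eq:Riesz} rewrites this as
\begin{align*}
	\mu(Q) \leq \int u\,\partial_t\phi\dx\dt - \int|\nabla u|^{p-2}\nabla u\cdot\nabla\phi\dx\dt.
\end{align*}
The first integral is then controlled using only the $L^\infty$ bound $u \leq \Lambda$, while the second is controlled via H\"older plus a parabolic Caccioppoli energy estimate.

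\textbf{The two estimates.} Take $\phi(x,t) = \psi(x)\theta(t)$ with $\psi \in C_0^\infty(B_{2r}(x_0))$ equal to $1$ on $B_r(x_0)$ and $|\nabla\psi| \leq C/r$, and with $\theta$ a smooth bump equal to $1$ on $(t_0 - \delta\Lambda^{2-p}r^p, t_0)$ (up to a small buffer one eventually sends to $0$) and supported in the time slab of $2Q$. Since $\theta$ rises from $0$ to $1$ and back to $0$, its total variation satisfies $\int|\theta'(t)|\dt \leq 2$ independently of scale, so using $u \leq \Lambda$ on $2Q\cap\Omega_T$ slicewise,
\begin{align*}
	\biggl|\int u\,\partial_t\phi\dx\dt\biggr| \leq \Lambda\int\psi\dx\int|\theta'|\dt \leq C\Lambda r^n.
\end{align*}
For the gradient term, H\"older together with $|\nabla\phi| \leq C/r$ yields
\begin{align*}
	\int|\nabla u|^{p-1}|\nabla\phi|\dx\dt \leq \frac{C}{r}\,|2Q|^{1/p}\biggl(\int_{2Q}|\nabla u|^p\dx\dt\biggr)^{(p-1)/p}.
\end{align*}

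\textbf{Caccioppoli and conclusion.} Extending $u$ by zero across the lateral part of $\partial_p\Omega_T \cap 2Q$ (where it vanishes continuously) makes $u$ a bounded non-negative weak subsolution on $2Q$, so the standard parabolic Caccioppoli inequality, obtained by testing the subsolution inequality against $u\eta^p$ for a smooth space-time cutoff $\eta$ adapted to the intrinsic geometry, gives
\begin{align*}
	\int_{2Q}|\nabla u|^p\dx\dt \leq C\int\biggl(\frac{u^p}{r^p} + \frac{u^2}{\delta\Lambda^{2-p}r^p}\biggr)\dx\dt \leq C\Lambda^2 r^n,
\end{align*}
using $u \leq \Lambda$ together with $|2Q| \asymp \delta\Lambda^{2-p}r^{n+p}$. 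Substituting into the H\"older bound and tracking exponents gives $\frac{C}{r}|2Q|^{1/p}(\Lambda^2 r^n)^{(p-1)/p} = C\delta^{1/p}\Lambda r^n \leq C\Lambda r^n$, and combining with the time-derivative estimate yields $\mu(Q) \leq C\Lambda r^n$. The one subtle point is the Caccioppoli step: because $u$ is not known to vanish on the top of $2Q$, a two-sided time cutoff must be used in $\eta$, which is what produces the $L^2$ term in the energy bound; the \emph{intrinsic} time length $\delta\Lambda^{2-p}r^p$ is exactly what balances the $L^2$ and $L^p$ contributions at the correct scale $\Lambda^2 r^n$, so that everything is consistent and the desired bound follows.
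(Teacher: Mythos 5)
Your proposal is correct and follows essentially the same route as the paper's own proof: test the Riesz identity \cref{eq:Riesz} with a cutoff equal to one on $Q$ and supported in $2Q$, bound the time-derivative term using only $u\leq\Lambda$, and control the gradient term by H\"older together with a Caccioppoli estimate for the zero-extension of $u$, with the same exponent bookkeeping yielding $\mu(Q)\leq C\Lambda r^n$. The only point to tidy up is the energy step: a Caccioppoli cutoff supported in $2Q$ controls the weighted energy $\int_{2Q}|\nabla u|^p\phi^p$ rather than $\int_{2Q}|\nabla u|^p$, so either keep the weight $\phi^{p-1}$ inside the H\"older inequality (as the paper does) or use nested cylinders; relatedly, your two-sided time cutoff is unnecessary (and your explanation of the $L^2$ term is slightly off), since only $(\phi_t)_+$ enters both the measure identity and the subsolution Caccioppoli, so a cutoff rising from the bottom of $2Q$ and equal to one up to the top time $t_0$ suffices and avoids the $\epsilon$-buffer limit.
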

\begin{proof}
	As in the construction of the measure $\mu$ in \Cref{eq:Riesz}, we see that extending $u$ to the entire cylinder $Q$ as zero, we obtain a weak subsolution \cref{Hu} in $Q$.	
	Take a cut-off function $\phi \in C^\infty(2Q)$ vanishing on $\partial_p 2Q$ such that $0 \leq \phi \leq 1$, $\phi$ is one on $Q$, and $|\grad \phi| < C/r$ and $(\phi_t)_+ < C \frac{\Lambda^{p-2}}{\delta r^p}$. Then by~\cref{eq:Riesz}, the definition of~$\phi$ and H{\"o}lder's inequality we get
	\begin{align*}
		\int_{2Q} \phi^p \de \mu &\leq \int_{2Q} |\grad u|^{p-1} |\grad \phi| \phi^{p-1} \dx \dt + \int_{2Q} u (\phi_t)_+ \phi^{p-1}\dx \dt \\
		&\leq \frac{4}{r} \int_{2Q} |\grad u|^{p-1} \phi^{p-1} \dx \dt + \int_{2Q} u (\phi_t)_+ \phi^{p-1} \dx\dt \\
		&\leq \frac{4}{r} |2Q|^{1/p} \left ( \int_{2Q} |\grad u|^p \phi^p \dx \dt \right )^{\frac{p-1}{p}} + \int_{2Q} u (\phi_t)_+ \phi^{p-1} \dx \dt .
	\end{align*}
	Now using the standard Caccioppoli estimate
	\begin{align*}
		\mu(Q) &\leq C \frac{|2Q|^{1/p}}{r} \left ( \int_{2Q} u^p |\grad \phi|^p + u^2 (\phi_t)_+ \phi^{p-1} \dx \dt \right )^{\frac{p-1}{p}} + \int_{2Q} u (\phi_t)_+ \phi^{p-1} \dx \dt , \\
		&\leq C \frac{|2Q|^{1/p}}{r} \left (|2Q| \left (\frac{\Lambda^p}{r^{p}} + \frac{\Lambda^p}{\delta r^p } \right ) \right )^{\frac{p-1}{p}} + C |2Q| \frac{\Lambda^{p-1}}{\delta r^p} \\
		&\leq C \frac{|2Q|}{\delta r^p} \Lambda^{p-1} = C \frac{r^n \delta \Lambda^{2-p} r^p}{\delta r^p} \Lambda^{p-1} \leq C r^n \Lambda .
	\end{align*}
\end{proof}

We are now ready to tackle the proof of \cref{thm:Riesz_no_support}, which just utilizes the above estimate to get the radius dependency explicit such that when considering a covering will just imply that the measure has no support and its restriction to the set of degeneracy is simply zero.

\begin{proof}[Proof of \cref{thm:Riesz_no_support}]
	Let $\hat E = \hat B \times \hat I$ be a space time cylinder such that $\hat E \Subset E$. The assumptions on $u$ gives that there exists a constant $C$ such that
	\begin{equation} \label{eq meas zero 1}
		u(x,t) \leq C |x-w|^{\frac{p}{p-2}}, \quad (x,t) \in \hat E, w \in \hat B \cap \partial \Omega.
	\end{equation}
	Consider a cube $Q_{\varrho,\sigma}(y,s) = (\partial \Omega \cap B_{\varrho}(y)) \times (s-\sigma,s)$ such that $2Q_{\varrho,\sigma}(y,s) \subset \hat E \cap \partial_p \Omega_T$. Note that \Cref{eq meas zero 1} gives
	\begin{equation} \nonumber
		\sup_{2Q_{\varrho,\sigma}(y,s)} u \leq C_0 \varrho^{\frac{p}{p-2}}.
	\end{equation}
	Thus setting $\Lambda = C_0 \varrho^{\frac{p}{p-2}}$ and setting $\delta = C_0^{p-2} \sigma$ we get from \Cref{MuUpperBound} that
	\begin{equation} \nonumber 
		\mu(Q_{\varrho,\sigma}(y,s)) \leq C(\sigma) \varrho^{n+\frac{p}{p-2}}.
	\end{equation} 
	Now, since the height is fixed as $\sigma$ irrespective of the radius $\varrho > 0$ the decay-rate of the measure is greater than $\varrho^n$ which simply implies that it is a zero measure inside $\hat E$.
\end{proof}

For the proof of \cref{thm:Riesz_support} we will be needing the following two lemmas from \cite{AKN}.

\begin{lemma}
	\label{MuLowerSimple} Let $\Omega \subset \R^n$ be an NTA-domain with constants $M$ and $r_0=2$. There exists constants $C,T_0$, both depending on $p,n,M$, such that if $v$ is a continuous solution to the problem
	\begin{equation*}
		\label{eq:vDirichlet}
		\begin{cases}
			v_t - \Delta_p v = 0 &\text{ in } (\Omega \cap B_2(0)) \times (0,T_0) \\
			v = 0 &\text{ on } \partial (\Omega \cap B_2(0)) \times [0,T_0) \\
			v = \chi_{B_{1/(4M)}(a_1(0))} & \text{ on } (\Omega \cap B_2(0)) \times \{0 \},
		\end{cases}
	\end{equation*}
	then
	\begin{equation*}
		\mu_v\big(B_2(0) \times (0,T_0)\big) \geq 1/C.
	\end{equation*}
	Furthermore, constants $C, T_0$, are stable as $p \to 2^+$
\end{lemma}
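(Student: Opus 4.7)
The plan is a mass-balance argument driven by a spatial cutoff. First I extend $v$ by zero to $Q:=B_2(0)\times(0,T_0)$, which makes $v$ a continuous subsolution with associated Riesz measure $\mu_v$ supported on $(\partial\Omega\cap B_2(0))\times(0,T_0)$. Then I pick $\eta\in C_0^\infty(B_2(0))$ with $\eta\equiv 1$ on $B_{3/2}(0)$, $0\leq\eta\leq 1$, $|\nabla\eta|\leq 8$, and test the identity \eqref{eq:Riesz} with $\phi(x,t)=\eta(x)\psi_\varepsilon(t)$ where $\psi_\varepsilon\in C_0^\infty((0,T_0))$ approximates $\chi_{(0,T_0)}$. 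Using continuity of $v$ on $\overline{\Omega_T}$ to pass to the limit $\varepsilon\to 0$, one gets
\begin{equation*}
    \mu_v(Q)\;\geq\;\int\eta\,d\mu_v\;=\;F(0)-F(T_0)-\mathcal{A},
\end{equation*}
where $F(t)=\int v(x,t)\eta(x)\,dx$ and $\mathcal{A}=\int_0^{T_0}\!\int|\nabla v|^{p-2}\nabla v\cdot\nabla\eta\,dx\,dt$.

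Since $M\geq 2$ and $|a_1(0)|<1$, the set $B_{1/(4M)}(a_1(0))$ lies inside $B_{3/2}(0)$ where $\eta=1$, so $F(0)=|B_{1/(4M)}(a_1(0))|=:c_1(n,M)>0$. The plan then is to choose $T_0=T_0(p,n,M)$ so that $F(T_0)\leq c_1/3$ and $|\mathcal{A}|\leq c_1/3$, which yields $\mu_v(Q)\geq c_1/3=:1/C$.

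For $F(T_0)$ small I would compare $v$ with the Cauchy-problem solution on $\R^n$ having the same initial datum (which dominates $v$ by the parabolic comparison principle, as $v$ vanishes on the lateral boundary), then invoke the standard Barenblatt $L^\infty$-$L^1$ regularization bound $\|v(\cdot,t)\|_\infty\leq C(n,p)\,c_1^{p/\mu}t^{-n/\mu}$ with $\mu=n(p-2)+p$; picking $T_0$ large enough (still depending only on $p,n,M$) gives $F(T_0)\leq|B_{3/2}|\|v(\cdot,T_0)\|_\infty\leq c_1/3$. For $\mathcal{A}$ I would apply a standard Caccioppoli inequality with a cutoff $\zeta\in C_0^\infty(B_2)$ equal to $1$ on $B_{7/4}$, getting
\begin{equation*}
    \int_0^{T_0}\!\int|\nabla v|^p\zeta^p\,dx\,dt\;\leq\;\int v_0^2+C\int_0^{T_0}\!\int v^p|\nabla\zeta|^p\,dx\,dt,
\end{equation*}
then bound the Hölder pairing $|\mathcal{A}|\leq 8\big(\int\!\int|\nabla v|^p\zeta^p\big)^{(p-1)/p}|B_2\setminus B_{3/2}|^{1/p}T_0^{1/p}$.

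The hard part is controlling $\int_0^{T_0}\!\int v^p\,dx\,dt$ uniformly. I would split the time integral at the threshold $T_*\sim c_1^{p/n}$ where the crude bound $v\leq 1$ takes over the Barenblatt decay, then estimate $\int v^p\leq\|v\|_\infty^{p-1}\|v\|_1\leq\|v\|_\infty^{p-1}c_1$ using that $\|v(\cdot,t)\|_{L^1}$ is non-increasing in $t$ thanks to the Dirichlet boundary. The delicate balancing is that $T_0$ must be simultaneously large enough for $L^\infty$-decay to take effect and small enough (or chosen with a cutoff-radius tuned to $T_0$) for the annular dissipation to stay below $c_1/3$; this works out provided the Barenblatt and Caccioppoli constants are tracked carefully and is essentially the same analysis that yields $p$-stability as $p\to 2^+$ since the Barenblatt exponents and the energy constants depend continuously on $p$. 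If the naive Hölder bound on $\mathcal{A}$ proves too loose, a refinement uses $v\leq 1$ globally to absorb a large part of the annular energy and localize the remainder to a narrower shell $B_{15/8}\setminus B_{7/4}$, where by finite propagation plus the decay estimate the solution $v$ is already small.
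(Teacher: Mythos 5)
The paper itself offers no proof of \cref{MuLowerSimple}: it is imported verbatim from \cite{AKN}, so your proposal has to stand on its own, and it has a genuine gap at its central step, namely the bound $|\mathcal{A}|\le c_1/3$. The mass-balance identity $\int\eta\,d\mu_v=F(0)-F(T_0)-\mathcal{A}$ and the two flanking estimates are fine: $F(0)=c_1:=|B_{1/(4M)}|$, and comparison from above with the Cauchy solution plus the $L^1$--$L^\infty$ smoothing estimate gives $F(T_0)\le c_1/3$ once $T_0\gtrsim c_1^{-(p-2)}$. But with that $T_0$ your own estimate of $\mathcal{A}$ does not close: the Caccioppoli bound gives annular energy at best of order $\int v_0^2=c_1$, and H\"older then yields $|\mathcal{A}|\lesssim c_1^{(p-1)/p}\,T_0^{1/p}\sim c_1^{1/p}$, which is far \emph{larger} than $c_1/3$ because $c_1=\omega_n(4M)^{-n}<1$. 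The ``finite propagation'' refinement cannot rescue this: for the dominating Cauchy solution to satisfy $\|v(\cdot,T_0)\|_\infty\le c_1/(3|B_{3/2}|)$, conservation of mass forces its support to have volume at least $3|B_{3/2}|$, i.e.\ to extend well beyond radius $3/2$, so at the very time scale where the sup-decay helps you cannot also claim that nothing has reached the annulus.

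The deeper problem is structural. By your identity, $\int\eta\,d\mu_v+\mathcal{A}\ge \tfrac23 c_1$; the whole content of the lemma is how this quantity splits between boundary absorption on $\partial\Omega$ and flux across $\{|x|=3/2\}$, and smallness of $\mathcal{A}$ is essentially a strengthened form of the conclusion (almost all lost mass exits through $\partial\Omega$ inside $B_{3/2}$). Every tool in your sketch is an upper bound (comparison from above, Caccioppoli, H\"older), and none of them distinguishes $\partial\Omega$ from $\partial B_2$ or uses the location of the corkscrew point $a_1(0)$ relative to $\partial\Omega$; an argument of this type could never detect that the mass is forced into the lateral boundary rather than across the annulus. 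What is needed --- and what the proof in \cite{AKN} supplies --- is a quantitative lower-bound ingredient: expansion of positivity/Barenblatt-type subsolutions adapted to the NTA geometry to guarantee the solution is of definite size up to the boundary where it must vanish, combined with a mechanism converting that positivity against the zero lateral data into a lower bound on the Riesz measure. Without such an ingredient the mass budget cannot be closed, so the proposal as it stands does not prove the lemma.
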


\begin{lemma}
	\label{meas order} Let $\Omega \subset \R^n$ be a domain. Let $u$ and $v$ be weak solutions in $(\Omega \cap B_r(0)) \times (0,T)$ such that $u \geq v \geq 0$ and both vanish continuously on the lateral boundary $(\partial \Omega \cap B_r(0)) \times (0,T)$. Then
	\begin{equation*}
		\mu_v \leq \mu_u \qquad \mbox{in }  B_r(0) \times (0,T)
	\end{equation*}
	in the sense of measures.
\end{lemma}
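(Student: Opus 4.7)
The plan is a Minty-type monotonicity argument driven by a carefully chosen test function.

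First I would extend both $u$ and $v$ by zero to $B_r(0)\times(0,T)$. The continuous vanishing on $(\partial\Omega\cap B_r(0))\times(0,T)$ makes the extensions non-negative continuous weak subsolutions to \cref{Hu}, and the Riesz measures $\mu_u,\mu_v$ from \cref{eq:Riesz} are both supported on $(\partial\Omega\cap B_r(0))\times(0,T)$, where in particular $u=v=0$.

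Given $\phi\in C_0^\infty(B_r(0)\times(0,T))$ with $\phi\geq 0$, pick the piecewise-linear function $h_\epsilon:\R\to[0,1]$ with $h_\epsilon\equiv 0$ on $(-\infty,0]$, $h_\epsilon\equiv 1$ on $[\epsilon,\infty)$, and linear on $[0,\epsilon]$. The crucial observation is $h_\epsilon(0)=0$, so on the supports of both $\mu_u$ and $\mu_v$ the factor $1-h_\epsilon(u-v)$ equals one. Using $\psi_\epsilon:=\phi(1-h_\epsilon(u-v))$ as test function in the weak formulations of $u$ and $v$ (after Steklov-averaging $u,v$ in time and then letting the Steklov parameter vanish), the measure terms are preserved:
\begin{equation*}
\int\psi_\epsilon\de\mu_u=\int\phi\de\mu_u,\qquad \int\psi_\epsilon\de\mu_v=\int\phi\de\mu_v.
\end{equation*}

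Writing $w:=u-v\geq 0$, $A:=|\nabla u|^{p-2}\nabla u-|\nabla v|^{p-2}\nabla v$, and $H_\epsilon(s):=\int_0^s \sigma\, h_\epsilon'(\sigma)\de \sigma$, and expanding $\partial_t\psi_\epsilon$ and $\nabla\psi_\epsilon$ by the chain rule (using the identity $w\,h_\epsilon'(w)\partial_t w=\partial_t H_\epsilon(w)$ then integrated by parts in time), subtraction of the two weak formulations yields
\begin{equation*}
\int\phi\de(\mu_u-\mu_v)=\int w(1-h_\epsilon(w))\partial_t\phi+\int H_\epsilon(w)\partial_t\phi-\int (1-h_\epsilon(w))A\cdot\nabla\phi+\int\phi \,h_\epsilon'(w)\,A\cdot\nabla w.
\end{equation*}
Minty's monotonicity inequality gives $A\cdot\nabla w\geq 0$ pointwise, and combined with $\phi,h_\epsilon'\geq 0$ the last integral is non-negative.

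Finally I would let $\epsilon\to 0^+$: uniformly $H_\epsilon\leq\epsilon/2$ and $w(1-h_\epsilon(w))\leq\epsilon$, so the first two integrals vanish; for the third, $(1-h_\epsilon(w))\to\chi_{\{u=v\}}$ a.e., and on $\{u=v\}$ Stampacchia's lemma for Sobolev functions gives $\nabla u=\nabla v$ a.e., hence $A=0$ there, so by dominated convergence (using $|A|\in L^{p/(p-1)}_{\mathrm{loc}}$ via Caccioppoli) this third integral also tends to $0$. Therefore
\begin{equation*}
\int\phi\de(\mu_u-\mu_v)=\lim_{\epsilon\to 0^+}\int\phi \,h_\epsilon'(w)\,A\cdot\nabla w\;\geq\; 0,
\end{equation*}
which, since $\phi\geq 0$ in $C_0^\infty$ is arbitrary, yields $\mu_v\leq\mu_u$ on $B_r(0)\times(0,T)$. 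The main technical obstacle is the rigorous time-integration-by-parts, handled as usual by working at the Steklov-averaged level and passing the Steklov parameter to zero before letting $\epsilon\to 0^+$.
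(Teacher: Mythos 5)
Your proof is correct, and note that the paper itself offers no argument for \cref{meas order} (it is quoted from \cite{AKN}), so there is nothing in-paper to compare against; your truncation-plus-monotonicity argument is the natural and standard route for such comparisons of boundary Riesz measures. The two places where rigor is actually needed are exactly the ones you flag: (i) the passage from $C_0^\infty$ test functions in \cref{eq:Riesz} to $\psi_\epsilon=\phi\bigl(1-h_\epsilon(u-v)\bigr)$, which is legitimate because $\psi_\epsilon$ is continuous, compactly supported, has spatial gradient in $L^p$, and coincides with $\phi$ on $\supp\mu_u\cup\supp\mu_v\subset(\partial\Omega\cap B_r(0))\times(0,T)$ where $u=v=0$; and (ii) the time-derivative bookkeeping, handled by Steklov averaging so that $w\,h_\epsilon'(w)\partial_t w$ is replaced by $\partial_t H_\epsilon(w_h)$ before the averaging parameter is sent to zero. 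With those routine justifications, the identity you display, the monotonicity inequality $A\cdot\nabla w\geq 0$, the uniform bounds $w(1-h_\epsilon(w))\leq\epsilon$, $H_\epsilon(w)\leq\epsilon/2$, and the fact that $A=0$ a.e.\ on $\{u=v\}$ give $\int\phi\,\de(\mu_u-\mu_v)\geq 0$ for all non-negative $\phi$, which is the claim.
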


\begin{proof}[Proof of \cref{thm:Riesz_support}]
	From \cref{eq:Riesz_support_upper_bound} we see that
	\begin{align*}
		u(x,t) \geq \Lambda d(x,\partial \Omega).
	\end{align*}
	Let us now consider $(t_1,t_2) \Subset (t_0-\epsilon,t_0)$ and a point $a_{\varrho}(y)$ for $y \in \partial \Omega \cap B_r(w)$ such that $B \equiv B_{\varrho/M}(a_{\varrho}(y)) \subset B_r(w) \cap \Omega$ for some $\varrho < r$ then
	\begin{align*}
		u(x,t) \geq C \Lambda \varrho, \qquad (x,t) \in 2^{-1}B \times [t_1,t_2]
	\end{align*}
	for some constant $C$ not depending on $u$. Let us now consider the rescaled function $v$ as follows
	\begin{align*}
		w(x,t) = \frac{1}{C \Lambda \varrho} u \left (y + \frac{\varrho}{2}x, t_1 + (C \Lambda \varrho)^{2-p} \left (\frac{\varrho}{2}\right )^p t \right )
	\end{align*}
	then $w$ is a solution in $B_2(0) \times [0,T]$ (where $T = (C \Lambda \varrho)^{p-2} \left (\frac{\varrho}{2}\right )^{-p} (t_2-t_1)$) such that
	\begin{align*}
		w(x,t) \geq 1, \qquad (x,t) \in B_{1/(4M)}(a_1(0)).
	\end{align*}
	Before using \cref{MuLowerSimple} we need to know that $T \geq T_0$ (where $T_0$ is from \cref{MuLowerSimple}), first note that
	\begin{align*}
		(C \Lambda \varrho)^{p-2} \left (\frac{2}{\varrho}\right )^p = (C \Lambda)^{p-2}  2^{p} \varrho^{-2}
	\end{align*}
	thus taking $\varrho$ small enough depending on $t_2-t_1$ and $\Lambda$ we get
	\begin{align*}
		(C \Lambda)^{p-2}  2^{p} \varrho^{-2} (t_2-t_1) \geq T_0.
	\end{align*}
	Now using \cref{MuLowerSimple,meas order} we get that
	\begin{align*}
		\mu_{w}(B_2(0) \times (0,T_0)) \geq 1/C,
	\end{align*}
	for a constant $C(p,n,M) > 1$. Scaling back to our original variables we obtain that
	\begin{align*}
		\frac{\mu(B_{\varrho}(y) \times [t_1,t_2])}{\varrho^{n+1}} \geq C \Lambda
	\end{align*}
\end{proof}
\begin{remark}
	What we can learn from the above proof is that if $t_2-t_1 \approx \varrho^2$ then the measure of a parabolic cylinder (heat equation $(\varrho,\varrho^2)$) is of size $\varrho^{n+1}$, which is exactly the same as for the caloric measure related to the heat equation. In this sense the non-degeneracy assumption that $u \geq d(x,\partial \Omega)$ `linearizes' the equation.
\end{remark}
The next lemma is essentially trivial in its conclusion given continuity of the gradient and the representation of the Riesz measure as the limit of $|\grad u|^{p-1}$ on the boundary, however the proof highlights a way of thought which would be important when moving to other domains. Moreover since we are assuming an NTA-domain with interior ball condition this proof is considering the circumstances fairly straight forward.
\begin{lemma}
	Let $\Omega \subset \R^n$ be a bounded NTA-domain satisfying the interior ball condition. Furthermore assume that in $Q = B_r(x_0) \cap \Omega \times (0,T)$ the measure $\mu$ vanishes. Then for any $w \in B_r(x_0) \cap \partial \Omega$ we have
	\begin{align*}
		(0,T) \in \ddegg(w).
	\end{align*}
\end{lemma}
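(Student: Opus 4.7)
The plan is to argue by contrapositive through \cref{thm:Riesz_support}. Fix $w \in B_r(x_0) \cap \partial \Omega$ and suppose, for a contradiction, that $(0,T) \not\subset \ddegg(w)$. By \cref{thm complete dichotomy}, $(\dpar(w))^\circ$ is then a non-empty open sub-interval of $(0,T)$, so we may pick $t^* \in (\dpar(w))^\circ$. The defining property of $\dpar(w)$ at $t^*$ yields constants $c_* > 0$ and $\rho_0 \in (0, r_0)$ with
\begin{equation*}
u(x, t^*) \geq c_* \, d(x, \partial \Omega), \qquad x \in B_{\rho_0}(w) \cap \Omega.
\end{equation*}

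The main step is to upgrade this single-slice estimate to the uniform space-time lower bound demanded by \cref{thm:Riesz_support}. Here the interior ball condition enters decisively: for each $y \in B_{\rho_0/2}(w) \cap \partial \Omega$ there is an interior tangent ball $B_{r_1}(x^+_y) \subset \Omega$ of radius $r_1 := \min\{r_0, \rho_0/4\}$ touching $\partial \Omega$ only at $y$. Since $d(x^+_y, \partial \Omega) = r_1$, the displayed bound gives $u(x^+_y, t^*) \geq c_* r_1$. Translating and rescaling the Barenblatt sub-solution of \cref{lower bound1} to fit inside this ball and invoking the parabolic comparison principle (in exactly the manner used in the proof of \cref{thm linearization}) produces constants $c_1, \delta > 0$, independent of $y$, such that
\begin{equation*}
u(x, t) \geq c_1 \bigl( r_1 - |x - x^+_y| \bigr)_+, \qquad (x, t) \in B_{r_1}(x^+_y) \times (t^*, t^* + \delta).
\end{equation*}

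The geometric observation that closes the argument is that for $x \in B_{\rho_0/8}(w) \cap \Omega$ near $\partial \Omega$, the nearest boundary point $y_x$ lies in $B_{\rho_0/2}(w)$ and $x$ sits on the inward normal from $y_x$; hence $x$ is on the axis of $B_{r_1}(x^+_{y_x})$ and $r_1 - |x - x^+_{y_x}| = d(x, \partial \Omega)$, producing $u(x,t) \geq c_1 d(x,\partial\Omega)$ in that regime. Points at definite distance from $\partial \Omega$ are handled by a routine interior Harnack chain starting from $x^+_{y_x}$, so in total we obtain a $\Lambda > 0$ with
\begin{equation*}
\sup_{(B_{\rho_0/8}(w) \cap \Omega) \times (t^*, t^* + \delta)} \frac{d(x, \partial \Omega)}{u(x, t)} \leq \Lambda^{-1}.
\end{equation*}
Shrinking $\rho_0$ and $\delta$ if necessary so that $E := B_{\rho_0/8}(w) \times (t^*, t^* + \delta) \subset B_r(x_0) \times (0,T)$, an application of \cref{thm:Riesz_support} with reference time $t_0 := t^* + \delta$ forces $\mu(E \cap \partial_p \Omega_T) > 0$, contradicting the assumption that $\mu$ vanishes on $Q$. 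The technical heart is the uniform upgrade step; the interior ball condition is what aligns the Barenblatt barrier with the inward normal, so that $r_1 - |x - x^+_{y_x}|$ actually reproduces $d(x, \partial \Omega)$ rather than merely a weaker radial distance.
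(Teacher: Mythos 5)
Your skeleton is the same as the paper's: negate the conclusion, use the dichotomy to produce a time $t^*$ at which $w$ is non-degenerate, upgrade this to a lower bound of the form $u\geq \Lambda\, d(\cdot,\partial\Omega)$ on a space-time boundary cylinder, and contradict $\mu\equiv 0$ through \cref{thm:Riesz_support}. The paper performs the upgrade with the forward Harnack chain (whose waiting time becomes of order $\varrho^{2}$ precisely because of the non-degeneracy) followed by the barrier argument from the proof of \cref{thm linearization}; you instead try to do it in one stroke by a Barenblatt comparison inside the interior tangent balls, and that is where the argument breaks.

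The ``geometric observation that closes the argument'' is unjustified and in general false for the class of domains in the statement. A bounded NTA domain satisfying the interior ball condition of \cref{defBall} need not have a $C^{1}$ boundary, so there is no inward normal, and nothing forces $x$ to lie on the segment joining its nearest boundary point $y_x$ to the center $x^{+}_{y_x}$ of the tangent ball: \cref{defBall} only guarantees that \emph{some} tangent ball exists at $y_x$, not a favourably oriented one. At a concave corner (locally the region above the graph of $x_n=-c|x'|$, which is NTA and satisfies the interior ball condition) the admissible tangent-ball axes at the corner fill a cone of positive aperture, and $x-y_x$ may point in any other direction of that cone; with an unfavourable but admissible tangent ball the angle between $x-y_x$ and $x^{+}_{y_x}-y_x$ can reach and even exceed $\pi/2$, so that $r_1-|x-x^{+}_{y_x}|$ is only a small fraction of $d(x,\partial\Omega)$, or negative, in which case $x\notin B_{r_1}(x^{+}_{y_x})$ and your lower bound is vacuous exactly where it is needed. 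Hence the identity $r_1-|x-x^{+}_{y_x}|=d(x,\partial\Omega)$, and even a uniform comparability, does not follow from the hypotheses, and the asserted bound $\sup d/u<\infty$ on $(B_{\rho_0/8}(w)\cap\Omega)\times(t^*,t^*+\delta)$ is not established. This is the step the paper's route is built to avoid: it propagates the time-slice information by the forward Harnack chain and the barrier construction of \cref{thm linearization}, which work with corkscrew points and balls $B_r(y)$ with $d(y,\partial\Omega)=r$ and do not hinge on aligning a prescribed tangent ball with the nearest-point direction. A further, minor and fixable, point: \cref{lower bound1} yields the distance-type bound \cref{distancelowerbound_simple} only after the waiting time $c_4$ (suitably rescaled), so your estimate cannot be claimed on $B_{r_1}(x^{+}_y)\times(t^{*},t^{*}+\delta)$ starting immediately at $t^{*}$; one must discard an initial time layer, which is harmless for the application of \cref{thm:Riesz_support}.
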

\begin{proof}
	Assume that there is a point $w \in B_r(x_0) \cap \partial \Omega$ such that there is a time $t \in (0,T)$ for which
	\begin{align*}
		t \in \dpar(w).
	\end{align*}
	From this we can conclude that as in previous estimates (see proof of \cref{thm:Riesz_support}) that if we wish to connect a point $a_\varrho(w)$ using a forward Harnack chain (see \cref{NTA FChain}) the waiting time will be of order $\varrho^2$. This implies via a barrier argument as in the proof of \cref{thm linearization} that for any $\hat t > t$ there is a neighborhood of $w$ for which $\hat t \in \dpar(\cdot)$ which implies via \cref{thm:Riesz_support} that the measure $\mu \neq 0$ and thus we have a contradiction.
\end{proof}
\section{What happens in non-smooth domains?}
\label{sec:cone}
In an NTA-domain $\Omega \subset \R^n$ that satisfies the interior ball condition we can use a barrier function as in \cref{lower bound1} to obtain that given a solutions initial data, if the existence time is large enough we will eventually get a linearization effect, i.e. the set $\dpar \neq \emptyset$. In this section we provide an adaptation of a proof by Vázquez to the $p$-parabolic equation which proves that if we are in a conical domain there is a solution for which the support never reaches the tip of the cone, this implies that we will be in $\ddeg$ no matter how long the solution exists. 
\subsection{No support in a cone}
Let $S$ be a open connected subset of the $n-1$ dimensional unit sphere $S^{n-1}$ with a smooth boundary, and define the conical domain
\begin{align*}
	K(R,S) = \{r \sigma\, |\, \sigma \in S, 0 < r  R \},
\end{align*}
we are looking for non-negative solutions vanishing on the lateral surface of the cone,
\begin{align*}
	\Sigma(R,S)\times (0,T) = \{r \sigma\, |\, \sigma \in \partial S, 0 \leq r \leq R \} \times (0,T).
\end{align*}

The following argument is taken from \cite[p. 344-345]{Vaz} with exponents adapted to the parabolic $p$-Laplace equation.

To begin the argument we first need to define some similarity transforms and the scaling properties of the support. The similarity transform is given as
\begin{equation*}
	u_{\lambda}(x,t) = (T_{\lambda} u)(x,t) := \lambda^{q}u(x/\lambda,\mu t).
\end{equation*}
Let $f : S \to \R$ be a non-negative function on the spherical cap $S$ such that $f = 0$ on $\partial S$, then the semi-radial function $U(x) = |x|^q f(x/|x|)$ is independent of this transform. The $p$-parabolic equation is invariant under $T_\lambda$ if $\mu = \lambda^{q(p-2)-p}$ in the following
\begin{equation*}
	(u_\lambda)_t - \Delta_p u_\lambda = \lambda^{q}\mu u_t - \lambda^{q(p-1)-p} \Delta_p u.
\end{equation*}

Let $S_u(t)$ denote the support of a function $u$ at time $t$. Then $S_{u_\lambda}(t) = \lambda S_u(\mu t)$.
Define the distance to the origin from the set $S_u$
\begin{equation*}
	r_u(t) = \inf \{|x|: x \in S_u(t)\}
\end{equation*}
which satisfies the scaling property $r_{u_\lambda}(t) = \lambda r_u(\mu t)$.

Let now $U(x)$ be a function that we will use as initial data, and let us construct a solution which has zero initial value close to the origin and coincides with $U(x)$ outside a ball of size 1.
Specifically we will define $\bar u$ as a solution to the $p$-parabolic equation satisfying the initial data as follows
\begin{align*}
	\bar u(x,t)= U(x) \text{ for $|x| \geq 1$ and $\bar u(x,t) = 0$ for $0 < |x| < 1$, $(x,t) \in \partial_p \Omega_\infty$.}
\end{align*}
Let $a < 1$ be a given number, then due to the finite propagation there exists a $\tau > 0$ such that $S_{\bar u}(\tau) \cap B_a = \emptyset$. Furthermore since $U$ is a solution to the $p$-Laplace equation we have by the comparison principle that
\begin{align*}
	u(x,\tau) \leq U(x).
\end{align*}
Using the similarity transform with $\lambda = a$ produces a solution $\overline u_1 = T_a \overline u$ such that $\overline u_1 \leq U(x)$ and $\overline u_1(x,0) = 0$ iff $|x| < a$. We see that $\bar u_1$ satisfies
\begin{align*}
	\bar u_1(x,0)= U(x) \text{ for $|x| \geq a$ and $\bar u(x,t) = 0$ for $0 < |x| < a$, $(x,t) \in \partial_p \Omega_\infty$},
\end{align*}
which implies that $\bar u_1(x,0) \geq \bar u(x,\tau)$ and thus by the comparison principle we get
\begin{align*}
	\bar u_1(x,t) \geq \bar u(x,t+\tau)
\end{align*}
for all $t > 0$ which gives the following inequality concerning the supports
\begin{align*}
	r_{\bar u}(t+\tau) \geq r_{\bar u_1}(t) = a r_{\bar u}(\mu t).
\end{align*}
We now use the above to get for $t = \tau/\mu$
\begin{align*}
	r_{\bar u}\left (\frac{\tau}{\mu} + \tau \right ) \geq a r_{\bar u}\left(\mu \frac{\tau}{\mu} \right ) = a r_{\bar u}(\tau) \geq a^2,
\end{align*}
then an iteration yields
\begin{align*}
	r_{\bar u}(t_k) \geq a^{k+1} \quad \text{for} \quad t_k = \tau \sum_{j=0}^k \mu^{-j} = \tau \sum_{j=0}^{k} \left [\frac{1}{a} \right ]^{(q(p-2)-p)j}.
\end{align*}
We see that if $q(p-2)-p > 0$ then $t_k \to \infty$, i.e. $q > \frac{p}{p-2}$.

In conclusion we can say that if $q > \frac{p}{p-2}$ then the support of $u$ will never reach the vertex of the cone.
\section{Stationary interfaces}
\label{sec:interface} 
As we mentioned in \cref{rem:cont}, \cref{thm:Riesz_no_support} has consequences for stationarity of the `interface', i.e. the boundary of the support of the solution is stationary in time. The way we will illustrate this phenomenon is via a simple example
\begin{equation} \label{eq:numericalproblem}
	\left \{
	\begin{array}{rcll}
		u_t - \Delta_p u &=& 0, &\text{ in $(-1,1) \times (0,T)$ }\\
		v &=& 0, &\text{ on $\{-1\} \times (0,T)$}\\
		v &=& 1, &\text{ on $\{1\} \times (0,T)$ } \\
		v(x,0) &=& x_+^\frac{p}{p-2},&\text{ for $x \in [-1,1]$}.
	\end{array}
	\right .
\end{equation}
A consequence of \cref{rem:cont} is that the boundary of the support of $u$ in \cref{eq:numericalproblem} will be stationary for small times, furthermore if we let $\hat t$ be the critical time when the support starts moving, we can apply the barrier in the proof of \cref{thm memory} to obtain a lower bound on $\hat t$
\begin{align} \label{eq:conjlb}
	(c_p c_0)^{p-2} \leq \hat t.
\end{align}
In the above the constants $c_0,c_p$ are the ones in the proof of \cref{thm memory}, specifically
\begin{align} \label{eq:conjconst}
	(c_p c_0)^{p-2} = \left ( \frac{p-2}{p}\right )^{p-1} \frac{1}{n(p-2)+p}.
\end{align}
\begin{conjecture} \label{conj}
	The critical time $\hat t = (c_p c_0)^{p-2}$ is such that
	\begin{align*}
		u(0,\hat t) = 0, \quad u(0,t) > 0, \quad t > \hat t.
	\end{align*}
	for the solution $u$ in \cref{eq:numericalproblem}.
\end{conjecture}

We will explore the contents of \cref{conj} in this section, firstly in \cref{sec:upperbound} we provide a bound for $\hat t$ from below, secondly in \cref{sec:numeric} we perform a numerical simulation of the solution in \cref{eq:numericalproblem} to estimate $\hat t$.
\subsection{An upper bound on $\hat t$ in \cref{conj}}
\label{sec:upperbound}
To show an upper bound for $\hat t$ we will be building a sequence of barriers from below based on rescalings of the Barenblatt solution. 

\begin{theorem} \label{thm:conjub}
	The critical time $\hat t$ for which
	\begin{align*}
		u(0,\hat t) = 0, \quad u(0,t) > 0, \quad t > \hat t,
	\end{align*}
	holds for the solution $u$ in \cref{eq:numericalproblem}, satisfies
	\begin{align*}
		\hat t \leq \left ( \frac{p-2}{p}\right )^{p-1} \frac{1}{p}.
	\end{align*}
\end{theorem}

\begin{remark}
	Denote $t_+ = \left ( \frac{p-2}{p}\right )^{p-1} \frac{1}{p}$ the upper bound from the theorem above, and $t_-$ the lower bound from \cref{eq:conjlb} then for $n=1$ we have
	\begin{align*}
		\frac{t_+}{t_-} = 2 \frac{p-1}{p} > 1, \qquad p > 2.
	\end{align*}
	Unfortunately the upper bound in \cref{thm:conjub} does not depend on $n$, but we have captured the right asymptotic behavior for $p \to 2$.
\end{remark}

\begin{proof}
	To begin with our construction we first find the time where the Barenblatt solution has support $B_{1-\delta}$, $\delta \in (0,1)$. We assume that $C_0 = q$ (changes only the mass of the solution) and do the following computation
	\begin{align*}
		t^{-k} \left ( q - q \left ( \frac{1-\delta}{t^{k/n}}\right )^{\frac{p}{p-1}} \right )^{\frac{p-1}{p-2}} = 0
	\end{align*}
	we get the value of $t$ to be
	\begin{align*}
		(1-\delta)^\frac{n}{k} \equiv t_1(\delta).
	\end{align*}
	At $t_1$ the value of $\U(0,t_1)$ becomes
	\begin{align*}
		\U(0,t_1)=(1-\delta)^{-n} q^{\frac{p-1}{p-2}}.
	\end{align*}
	To allow us the flexibility we need in the following argument we set
	\begin{align*}
		\epsilon \lambda \equiv \U(0,t_1),
	\end{align*}
	for $\epsilon \in (0,1)$ to be chosen depending on $\delta$.
	Now consider the rescaled Barenblatt solution (still a solution to \cref{Hu} due to intrinsic scaling)
	\begin{align*}
		\hat \U(x,t) = \frac{1}{\lambda} \U(x,\lambda^{2-p} t+t_1)
	\end{align*}
	which at $t = 0$ is
	\begin{align*}
		\hat \U(x,0) = \frac{1}{\lambda} (1-\delta)^{-n} \left ( q - q \left ( \frac{|x|}{1-\delta}\right )^{\frac{p}{p-1}} \right )_+^{\frac{p-1}{p-2}} 
	\end{align*}
	\begin{align*}
		k = \left ( p-2 + \frac{p}{n} \right )^{-1}, \qquad q = \frac{p-2}{p} \left ( \frac{k}{n}\right )^{\frac{1}{p-1}}.
	\end{align*}
	Next, let us assume that $\hat U(x-1,0) \leq u(x,0)$ for $u$ as in \cref{eq:numericalproblem}, and let us find $t_2(\delta)$ such that the support of $\hat \U$ is $B_1$. This implies using the parabolic comparison principle that after $t_2$ the support of $u$ has moved away from $\{x = 0\}$.

	To proceed we need to choose $\epsilon$ given the value of $\delta$ such that $\hat U(x-1,0) \leq u(x,0)$ and is the unique largest value for which this inequality holds true. To find this $\epsilon$, we note that we wish to solve $\hat U(x,0) = |1-x|^{\frac{p}{p-2}}$ for a unique pair $x,\epsilon \in (0,1)$, i.e. we wish to solve
	\begin{align*}
		\frac{1}{\lambda^{p-2}} (1-\delta)^{-n} \left ( q - q \left ( \frac{r}{1-\delta}\right )^{\frac{p}{p-1}} \right )_+^{\frac{p-1}{p-2}} = (1-r)^{\frac{p}{p-2}}.
	\end{align*}
	Which by some manipulation yields the following
	\begin{align}\label{eq:tomte}
		\epsilon^{\frac{p-2}{p-1}} \left ( 1 - \left ( \frac{r}{1-\delta}\right )^{\frac{p}{p-1}} \right ) = (1-r)^{\frac{p}{p-1}}.
	\end{align}
	We wish to find a value for $\epsilon$ and $r$ such that the left hand side equals the right hand side, but the left hand side being smaller than the right hand side for all other values of $r$. This implies that at the point of contact their derivatives match and we can thus consider the simplified equation of the $r$ derivative of \cref{eq:tomte} (which has a solution for any $\epsilon$)
	\begin{align*}
		-\epsilon^{\frac{p-2}{p-1}} \frac{p}{p-1} \frac{1}{1-\delta} \left ( \frac{r}{1-\delta}\right )^{\frac{1}{p-1}} = - \frac{p}{p-1} (1-r)^{\frac{1}{p-1}}
	\end{align*}
	some algebraic manipulations later and we arrive at
	\begin{align} \label{eq:r_intersect}
		r  = \frac{(1-\delta)^{p}}{\epsilon^{p-2}+(1-\delta)^{p}}.
	\end{align}
	Plugging the value of $r$ from \cref{eq:r_intersect} into \cref{eq:tomte} gives us the problem of solving
	\begin{align*}
		\epsilon^{\frac{p-2}{p-1}} \left ( 1 - \left ( \frac{(1-\delta)^{p-1}}{\epsilon^{p-2}+(1-\delta)^{p}}\right )^{\frac{p}{p-1}} \right ) = \left (1-\frac{(1-\delta)^{p}}{\epsilon^{p-2}+(1-\delta)^{p}} \right )^{\frac{p}{p-1}}.
	\end{align*}
	As can be shown by a tedious calculation, the above equation is equivalent to the following
	\begin{align*}
		\epsilon^{\frac{p-2}{p-1}} \left ( 1- \frac{1}{((1-\delta)^p + \epsilon^{p-2})^{\frac{1}{p-1}}}\right ) = 0,
	\end{align*}
	which is solved by
	\begin{align} \label{eq:e_intersect}
		\epsilon = \left (1 - (1-\delta)^p \right )^\frac{1}{p-2}.
	\end{align}
	With the values of \cref{eq:r_intersect,eq:e_intersect} we can calculate the value of $t_2$ for the function $\hat \U$. Considering the definition of $\hat \U$ we see that $t_2$ satisfies the following
	\begin{align*}
		t_2(\delta) &= \lambda^{p-2}(1-t_1) = (1-\delta)^{-n(p-2)} q^{p-1} \epsilon^{2-p} (1-(1-\delta)^{n/k}) \\
		&=\frac{q^{p-1}(1-(1-\delta)^{n/k})}{(1-\delta)^{n(p-2)} (1 - (1-\delta)^p  )}
	\end{align*}
	which when $\delta \to 0$ becomes (a lengthy calculation shows that $t_2(\delta)$ is decreasing as $\delta \to 0$)
	\begin{align*}
		t_2(0) = q^{p-1}\frac{n}{pk} = \left(\frac{p-2}{p} \right )^{p-1} \frac{k}{n} \frac{n}{pk} = \frac{(p-2)^{p-1}}{p^p}.
	\end{align*} 
\end{proof}

\subsection{Numerical experiment of \cref{conj}}
\label{sec:numeric}

We will be using MOL (Method of Lines) to solve \cref{eq:numericalproblem}, which amounts to discretizing the equation in space leaving us with a system of non-linear ODE's (semi-discretization). To specify our numerical setup, consider the spatial discretization with $N$ steps and $dx \approx 1/N$, then the MOL equation becomes in a finite difference (FD) context
\begin{align} \label{eq:MOL}
	u_t^i = (p-1) (D^i_j u^j)^{p-2} H^i_j u^j, \qquad i = 0,\ldots, N,
\end{align}
where
\begin{align*}
	D^i_j u^j = 
	\begin{cases}
		\frac{u^{i+1} - u^{i-1}}{2 dx}, &\text{ if } 1 \leq i < N\\
		0
	\end{cases}
\end{align*}
is a basic FD type difference quotient (central difference quotients) and
\begin{align*}
	D^i_j u^j = 
	\begin{cases}
		(u^{i+1}-2u^i + u^{i-1})/(dx^2), &\text{ if } 1 \leq i < N\\
		0
	\end{cases}
\end{align*}
is a basic second order FD difference quotient. Thus we see that \cref{eq:MOL} is a system of $N+1$ nonlinear ODE's. The system \cref{eq:MOL} turns out to be stiff and sparse, we will be using Matlab's stiff solver \emph{ode15s} to solve this system numerically with the data given as in \cref{eq:numericalproblem}, with $T = 1.2 \hat t$ ($\hat t$ from \cref{conj}). See \cite{J} for convergence of a FEM semi-discretization.
\begin{figure} 
	\begin{center}
		\includegraphics[width = 12cm]{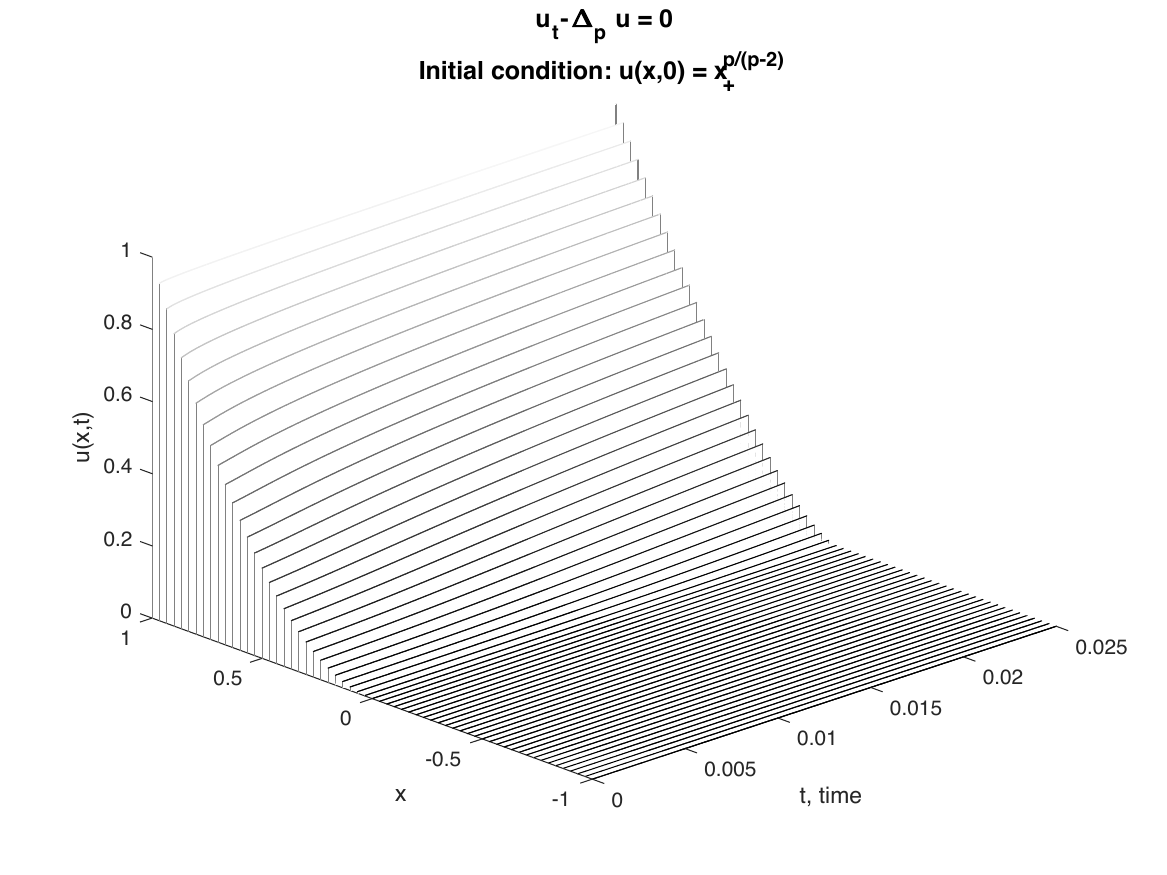}
	\end{center}
	\caption{Waterfall plot of the numerical solution of \cref{eq:MOL}, $p = 4$, observe the difference in profile at beginning and end.}
	\label{fig:MOL}
\end{figure}

Since we are interested in the support of our numerical solution, we will consider the equation satisfied by the interface ($u \equiv 0$), actually we will be considering the equation of a basic level set. That is, we are looking for a curve $\gamma : \R_+ \to \R$ such that for a given level $M \geq 0$ the following holds
\begin{align*}
	u(\gamma(t),t) = M.
\end{align*}
We will be considering the above problem with the initial point $\gamma(0)$ be be a point in the initial data that is equal to $M$, and for $M = 0$ it will be the edge of the support, i.e. $x = 0$. Proceeding formally and differentiating, the condition for $\gamma$ gives us,
\begin{align} \label{eq:levelseteq}
	u_x \gamma' + u_t = 0,
\end{align}
which after inserting the equation \cref{Hu} into \cref{eq:levelseteq} yields
\begin{align*}
	-\gamma'(t)  = \frac{\Delta_p u}{u_x}(\gamma(t),t) = (p-1) (u_x(\gamma(t),t))^{p-3} u_{xx}(\gamma(t),t).
\end{align*}
In \cref{fig:Interface} we see the result of the above equation when using the numerical solution of \cref{eq:MOL} seen in \cref{fig:MOL} as the approximate values for $u$ and approximating the first and second derivative with the respective FD quotients, as described in \cref{eq:MOL}.
\begin{figure}
	\begin{center}
		\includegraphics[width = 12cm]{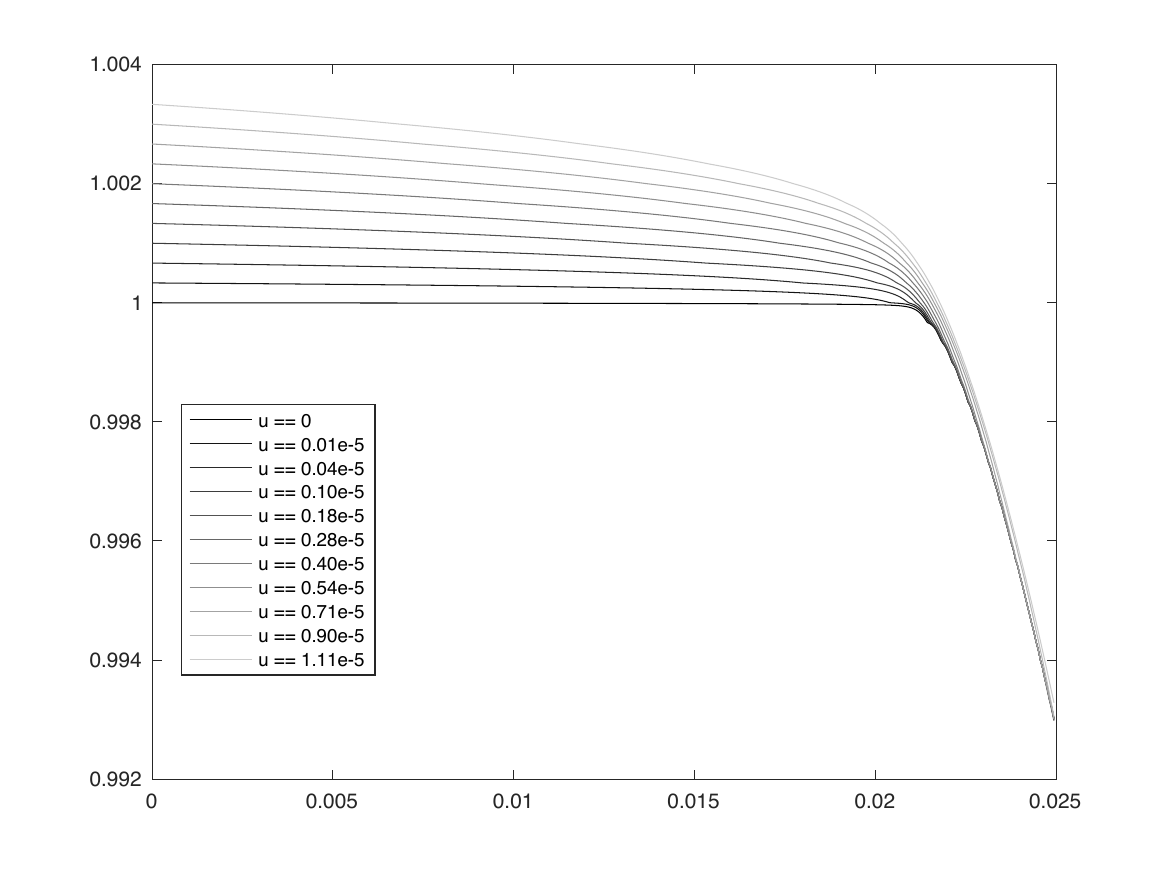}
	\end{center}
	\caption{Level sets, conjectured critical time is $t \approx 0.0208$, $p = 4$.}
	\label{fig:Interface}
\end{figure}
Upon visual inspection of \cref{fig:Interface} we see the sharp deviation of the support after roughly $t = 0.02$, which coincides quite well with the conjectured value of $0.0208$ for $p = 4$.

Based upon heuristic ideas we can expect that the profile of the solution towards the edge of the support will after the critical time behave like the Barenblatt solution, i.e. $u(x) \approx d(x,S(t)^C)^\frac{p-1}{p-2}$, where $S(t)$ is the support of the solution at $t$. The numerical result of the behavior at $1.2 \hat t$ can be found in \cref{fig:Profile} and the coincidence is striking, the edge of the support is estimated using the solution in \cref{fig:Interface}.

\begin{figure} 
	\begin{center}
		\includegraphics[width = 12cm]{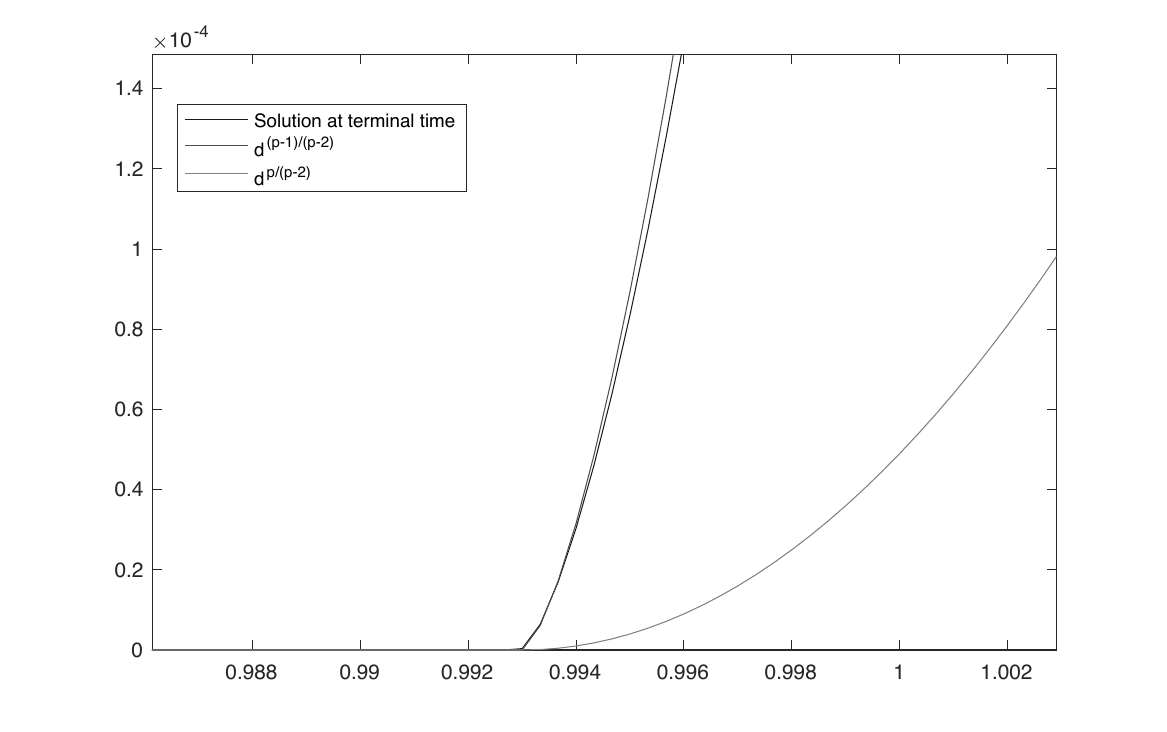}
	\end{center}
	\caption{Plot of the numerical solution of \cref{eq:MOL}, $p = 4$ at time $1.2 \hat t$, observe the coincidence of the profile of the solution with the profile of the Barenblatt, as compared to the profile of the initial datum (shifted to become zero at the same point as the other functions).}
	\label{fig:Profile}
\end{figure}

\newpage

\end{document}